\DeclareMathOperator{\Span}{span}
\DeclareMathOperator{\Sing}{sing}
\DeclareMathOperator{\rk}{rk}
\DeclareMathOperator{\Pic}{Pic}
\DeclareMathOperator{\codim}{codim}
\newtheorem{thm}{Theorem}[section]
\newtheorem{prop}[thm]{Proposition}
\newtheorem{exa}[thm]{Example}
\newtheorem{defn}[thm]{Definition}
\newtheorem{claim}[thm]{Claim}
\newtheorem{rem}[thm]{Remark}
\newtheorem{cor}[thm]{Corollary}
\newtheorem*{thm1}{Theorem \ref{idealSV}}
\title{The ideal of curves of genus $2$
on rational normal scrolls}
\author{Andrea Hofmann}
\address{Matematisk Institutt\\
Universitetet I Oslo\\
PO Box 1053\\
Blindern, NO-0316 Oslo, Norway}
\email{andrehof@math.uio.no}
\begin{document}

\begin{abstract}
Given a smooth curve of genus $2$ embedded in $\mathbf P^{d-2}$ with a complete linear system of degree $d\geq 6$, we list all types of rational normal scrolls arising from linear systems $g^1_2$ and $g^1_3$ on $C$.

\noindent Furthermore, we give a description of the ideal of such a curve of genus $2$ embedded in $\mathbf P^{d-2}$ as a sum of the ideal of the two-dimensional scroll defined by the unique $g^1_2$ on $C$ and the ideal of a three-dimensional scroll arising from a $g^1_3$ on $C$ and not containing the scroll defined by the $g^1_2$ on $C$.
\end{abstract}

\maketitle

\section{Introduction}\label{intro}

\noindent Given a projective variety $X$, in order to find a description of its ideal $I_X$ which is, in some sense, easy to handle, one useful approach is to look for a decomposition of $I_X$ as a sum of ideals of higher-dimensional varieties that contain $X$.
This problem at hand extends naturally to the question whether the syzygies of a given variety are generated by the syzygies of higher-dimensional varieties containing this variety.\\
In the past decades this question has been studied for curves $C\subseteq \mathbf P^n$. 
Since each $g^1_k$ on $C$ for $k\leq n-1$ gives rise to a rational normal scroll that contains the curve, natural candidates for these higher-dimensional varieties are exactly rational normal scrolls.
The above presented problem has been studied to some extent for elliptic normal curves and canonical curves:

\noindent In 1984 Green (\cite{Green1}) showed that the space of quadrics in the ideal of a non-hyperelliptic canonical curve of genus $g\geq 5$ is spanned by quadrics of rank less or equal to $4$.

\noindent In \cite{vBoHu} v. Bothmer and Hulek prove that the linear syzygies of elliptic normal curves, of their secant varieties and of bielliptic canonical curves are generated by syzygies of rational normal scrolls that contain these varieties.

\noindent In \cite{vBo} v. Bothmer proves that the $j$th syzygies of a general canonical curve of genus $g$ are generated by the $j$th syzygies of rational normal scrolls containing the curve for the cases $(g,j)\in \{(6,1),(7,1),(8,2)\}$,
and in \cite{vBo1} the author shows that the first syzygies of general canonical curves of genus $g\geq 9$ are generated by scrollar syzygies.  

\noindent In this paper we will study curves of genus $2$, which cannot be canonically embedded and are all hyperelliptic, and we will focus on the $0$th syzygies, i.e. the ideal $I_C$.

\noindent Let $C$ be a curve of genus $2$, embedded as a smooth and irreducible curve in $\mathbf P^{d-2}$ with a complete linear system of degree $d\geq 6$. Since the linear system is complete, the embedded curve $C\subseteq \mathbf P^{d-2}$ is linearly normal.\\

\noindent Our main result is the following:

\begin{thm1}
Let $C$ be a smooth and irreducible curve of genus $2$, linearly normal embedded in $\mathbf P^{d-2}$ with a complete linear system $\vert H_C\vert$ of degree $d\geq 6$. Moreover, let $S$ be the $g^1_2(C)$-scroll, and let $V$ be a $g^1_3(C)$-scroll that does not contain $S$. Then we have
$$I_S+I_V=I_C.$$ 
\end{thm1}

\noindent In Section \ref{idealC} we will give a proof of this theorem which is based on an inductive argument.

\noindent In Sections \ref{scrollS} and \ref{scrollV} we will list all possible scroll types of the unique $g^1_2(C)$-scroll $S$ and $g^1_3(C)$-scrolls $V_{\vert D\vert}$. We give a connection between the linear system $\vert H_C\vert$ that embeds the curve into projective space and the scroll types of $S$ and a $V_{\vert D\vert}$ and provide thus a proof of existence in all cases.

\section{Preliminaries}\label{prelim}
\noindent Let $C$ be a non-singular curve of genus $2$, and let $\vert H_C\vert$ be a complete linear system of degree $d\geq 6$ on $C$.
By the Riemann-Roch theorem for curves (see e.g. \cite{Ha}, Thm. 1.3 in Chapter IV.1) the system $\vert H_C\vert$ embeds $C$ into projective space $\mathbf P^{d-2}$.
Since $\vert H_C\vert$ is complete, the embedded curve $C\subseteq \mathbf P^{d-2}$ is linearly normal. 
Throughout the whole article $C$ will always denote a smooth curve of degree $d\geq 6$ in $\mathbf P^{d-2}$.

\noindent We use the notation $g^1_k(C)$ to denote a $g^1_k$ on $C$. We are interested in rational normal scrolls that arise from the unique $g^1_2(C)$ and from $g^1_3(C)$'s.
The following proposition states that indeed there is only one $g^1_2$ on $C$, and furthermore it computes the dimension of the family of $g^1_3$'s on $C$, which we will denote by $G^1_3(C)$:

\begin{prop}\label{linsystem}
There exists exactly one $g^1_2(C)$, and this is equal to the canonical system $\vert K_C\vert$.
The family $G^1_3(C):=\{g^1_3(C)'s\}$ is two-dimensional.
\end{prop}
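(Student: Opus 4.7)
The plan is to apply the Riemann--Roch theorem in both halves of the statement and then identify the parameter spaces.

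For the first assertion, observe that the canonical bundle on a genus~$2$ curve has $\deg K_C = 2g-2 = 2$ and $h^0(K_C) = g = 2$, so $\vert K_C \vert$ is itself a $g^1_2$. To see it is the only one, let $D$ be any effective divisor of degree~$2$ with $h^0(D) \geq 2$. Riemann--Roch gives
\[
h^0(D) - h^0(K_C - D) = \deg D - g + 1 = 1,
\]
so $h^0(K_C - D) \geq 1$. Since $\deg(K_C - D) = 0$, the existence of a nonzero section forces $K_C - D \sim 0$, i.e.\ $D \sim K_C$. Hence every $g^1_2(C)$ coincides with $\vert K_C \vert$.

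For the second assertion, let $D$ be any divisor of degree~$3$. Then $\deg(K_C - D) = -1 < 0$, so $h^0(K_C - D) = 0$, and Riemann--Roch yields
\[
h^0(D) = \deg D - g + 1 = 2.
\]
Thus the complete linear system $\vert D \vert$ is already one-dimensional, and it is the unique $g^1_3$ attached to the line bundle $\mathcal O_C(D)$. Consequently, giving a $g^1_3(C)$ is the same as giving a point of $\Pic^3(C)$.

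Finally, $\Pic^3(C)$ is a torsor over $\Jac(C) = \Pic^0(C)$, which for a curve of genus $2$ is an abelian variety of dimension $g = 2$. Therefore $G^1_3(C) \cong \Pic^3(C)$ is two-dimensional, as claimed. There is no genuine obstacle here: the only point requiring care is ensuring that a degree~$3$ line bundle cannot carry a strictly larger linear system (which would mean $G^1_3(C)$ parametrizes pencils inside bigger systems rather than complete systems), but the Riemann--Roch computation above rules this out immediately because $h^0(D) = 2$ exactly.
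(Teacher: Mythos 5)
Your proof is correct. The first assertion is handled exactly as in the paper: the same Riemann--Roch computation showing $h^0(\mathcal O_C(D))=1+h^0(\mathcal O_C(K_C-D))$ for $\deg D=2$, with the degree-zero line bundle $K_C-D$ admitting a section only when trivial. For the dimension count of $G^1_3(C)$, however, you take a genuinely different route. The paper works with the symmetric product $C_3=(C\times C\times C)/S_3$: it is three-dimensional, every complete degree-$3$ system is a pencil (by the same $h^0(D)=2$ computation you make), so the fibers of $C_3\to G^1_3(C)$ are one-dimensional and the base must be two-dimensional. You instead identify $G^1_3(C)$ with $\Pic^3(C)$ -- legitimate precisely because $h^0(D)=2$ for \emph{every} degree-$3$ class, so every point of $\Pic^3(C)$ carries a unique complete pencil and no degree-$3$ pencil sits inside a larger system -- and then quote that $\Pic^3(C)$ is a torsor over $\Jac(C)$, of dimension $g=2$. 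The two arguments are two sides of the same Abel--Jacobi map $C_3\to\Pic^3(C)$: the paper counts dimensions on the source and subtracts the fiber dimension, while you compute the dimension of the target directly. Your version buys more structure (it exhibits $G^1_3(C)$ as an abelian surface torsor, hence irreducible, smooth and proper), at the cost of invoking the theory of the Jacobian; the paper's count is more elementary, needing only the symmetric product. You were also right to flag, and correctly dispose of, the one delicate point -- that a degree-$3$ bundle could a priori carry a larger-than-one-dimensional system -- which the paper handles implicitly by the same $h^0(D)=2$ computation.
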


\begin{proof}
We use the Riemann-Roch theorem for curves (see e.g. \cite{Ha}, Thm. 1.3 in Chapter IV.1):

\noindent If $D$ is a divisor of degree $2$, then

$$
h^0(\mathcal O_C(D))=1+h^0(\mathcal O_C(K_C-D))=
\left\{\begin{array}{ccc}
1&\textrm{if}&D\notin\vert K_C\vert,\\
2&\textrm{if}&D\in \vert K_C\vert.\\
\end{array}\right.
$$

\noindent Hence we can conclude that the linear system $\vert D\vert$ is a $g^1_2(C)$ if and only if $\vert D\vert=\vert K_C\vert$.

\noindent If $D$ is a divisor of degree $3$ on $C$, then, again by the Riemann-Roch theorem for curves, $h^0(\mathcal O_C(D))=2$, i.e. each linear system $\vert D\vert$ of degree $3$ is a $g^1_3(C)$.
The set of all effective divisors of degree $3$ on $C$ is given by $C_3:=(C\times C\times C)/S_3$, where $S_3$ denotes the symmetric group on $3$ letters. 
The dimension of this family is equal to $3$, and since each linear system $\vert D\vert$ of degree $3$ has dimension $1$, as shown above, the family of $g^1_3(C)$'s has to be two-dimensional.
\end{proof}

\noindent We are interested in the rational normal scroll arising from the unique $g^1_2(C)$, and for each $\vert D\vert$ in the two-dimensional family $G^1_3(C)$ we are interested in the scroll defined by $\vert D\vert$.

\noindent There are several different presentations of a rational normal scroll. We will use two of these, which will be given in the following paragraphs.

\begin{defn}\label{ratscroll}\textup{(cf. \cite{Schreyer})}\\
\noindent Let $e_1,e_2,\ldots,e_k$ be integers with $e_1\geq e_2\geq \cdots \geq e_k\geq 0$ and $e_1+e_2+\cdots +e_k\geq 2$. Set $\mathcal E=\mathcal O_{\mathbf P^1}(e_1)\oplus \mathcal O_{\mathbf P^1}(e_2)\oplus \cdots \oplus \mathcal O_{\mathbf P^1}(e_k)$, a locally free sheaf of rank $k$ on $\mathbf P^1$, and let
$\pi: \mathbf P(\mathcal E)\to \mathbf P^{1}$ be the corresponding $\mathbf P^{k-1}$-bundle.   

\noindent A \textup{rational normal scroll} $X$ is the image of the map $\iota:\mathbf P(\mathcal E)\hookrightarrow \mathbf P^N:=\mathbf P H^0(\mathbf P(\mathcal E), \mathcal O_{\mathbf P(\mathcal E)}(1))$.\\
The \textup{scroll type} of $X$ is defined to be equal to $(e_1,e_2,\cdots ,e_k)$.
\end{defn}

\begin{rem}\label{degscroll}
\noindent The dimension of $X$ is equal to $k$, and the degree of $X$ is equal to the degree of $\mathcal E$ which is equal to $f:=\sum_{i=1}^k{e_i}$.
Moreover, by the Riemann-Roch theorem for vector bundles, $h^0(\mathbf P(\mathcal E),\mathcal O_{\mathbf P(\mathcal E)}(1))=h^0(\mathbf P^1,\mathcal E)=\rk(\mathcal E)+\deg(\mathcal E)=k+\sum_{i=1}^{k}e_i$, i.e. the dimension of the ambient projective space is equal to $N=k+\sum_{i=1}^{k}e_i-1$.\\
Thus for a rational normal scroll $X$ we obtain $\dim(X)+\deg(X)=k+\sum_{i=1}^ke_i=N+1$, and consequently a rational normal scroll $X\subseteq \mathbf P^N$ is a non-degenerate irreducible variety of minimal degree $f=\codim(X)+1$.

\noindent The scroll $X$ is smooth if and only if all $e_i$, $i=1,\ldots, k$, are positive.
If this is the case, then $\iota:\mathbf P(\mathcal E)\to X\subseteq \mathbf P^N$ is an isomorphism.
\end{rem}

\begin{prop}\label{linscroll}
Each linearly normal scroll $X$ over $\mathbf P^1$ is a rational normal scroll.
\end{prop}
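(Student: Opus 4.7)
\emph{Proof proposal.} The plan is to present $X$ explicitly as $\mathbf P(\mathcal E)$ for a suitable split bundle $\mathcal E$ on $\mathbf P^1$, to identify the embedding line bundle with $\mathcal O_{\mathbf P(\mathcal E)}(1)$ after a twist, and then to verify the numerical conditions of Definition \ref{ratscroll}.

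First, since $X$ is a scroll over $\mathbf P^1$ of some dimension $k$, it is a $\mathbf P^{k-1}$-bundle $\pi\colon X=\mathbf P(\mathcal E_0)\to\mathbf P^1$, and Grothendieck's splitting theorem will give $\mathcal E_0\cong\bigoplus_{i=1}^k \mathcal O_{\mathbf P^1}(a_i)$, which I reindex so that $a_1\geq a_2\geq\cdots\geq a_k$. Let $\mathcal L$ be the line bundle on $X$ that induces the embedding into $\mathbf P^N$. Because each fiber of $\pi$ is embedded as a linear $\mathbf P^{k-1}\subseteq\mathbf P^N$, the restriction of $\mathcal L$ to every fiber is $\mathcal O_{\mathbf P^{k-1}}(1)$; combining this with the standard decomposition $\Pic(\mathbf P(\mathcal E_0))=\mathbf Z\langle \mathcal O_{\mathbf P(\mathcal E_0)}(1)\rangle\oplus \pi^*\Pic(\mathbf P^1)$ forces
$$\mathcal L\cong \mathcal O_{\mathbf P(\mathcal E_0)}(1)\otimes \pi^*\mathcal O_{\mathbf P^1}(b)$$
for a unique $b\in\mathbf Z$. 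Setting $\mathcal E:=\mathcal E_0\otimes \mathcal O_{\mathbf P^1}(b)=\bigoplus_i \mathcal O_{\mathbf P^1}(e_i)$ with $e_i=a_i+b$, the invariance of projective bundles under twisting by a line bundle gives $\mathbf P(\mathcal E)\cong X$, and under this identification $\mathcal O_{\mathbf P(\mathcal E)}(1)=\mathcal L$.

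Next, linear normality of the embedding amounts to saying that $\iota$ factors through the full linear system $\mathbf P H^0(X,\mathcal L)=\mathbf P H^0(\mathbf P(\mathcal E),\mathcal O_{\mathbf P(\mathcal E)}(1))$, so $\iota$ is defined by the complete linear system $|\mathcal O_{\mathbf P(\mathcal E)}(1)|$, matching the format of Definition \ref{ratscroll}. It remains to check the numerical inequalities on the $e_i$. The chain $e_1\geq \cdots\geq e_k$ is inherited from the ordering of the $a_i$. The bound $e_k\geq 0$ will follow from basepoint-freeness of $\mathcal L$ (automatic, since $\iota$ is a morphism into $\mathbf P^N$): $\mathcal O_{\mathbf P(\mathcal E)}(1)$ is globally generated iff $\mathcal E$ itself is globally generated on $\mathbf P^1$, iff every summand has non-negative degree. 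The inequality $\sum_i e_i\geq 2$ will follow from non-degeneracy of $X$ together with Remark \ref{degscroll}, which yields $\codim X=\sum_i e_i-1$; if this were at most one, $X$ would be a linear subspace of $\mathbf P^N$, contradicting the assumption that $X$ is a genuine positive-codimension scroll.

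I expect the middle step to be the main obstacle: pinning down $\mathcal L$ as a twist of $\mathcal O_{\mathbf P(\mathcal E_0)}(1)$ uses the scroll hypothesis in an essential way (to force $\mathcal L|_F=\mathcal O_F(1)$ on every fiber) together with a careful application of the Picard-group decomposition of a projective bundle over $\mathbf P^1$. Once this identification is in hand, the remaining verification is essentially bookkeeping with Grothendieck's splitting and the dimension count in Remark \ref{degscroll}.
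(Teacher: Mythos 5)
Your proof is correct and follows essentially the same route as the paper's: linear normality identifies the embedding with the complete linear system $\vert\mathcal O_{\mathbf P(\mathcal E)}(1)\vert$, and Grothendieck's splitting theorem gives $\mathcal E=\bigoplus_i\mathcal O_{\mathbf P^1}(e_i)$. The paper compresses this by setting $\mathcal E:=\pi_{\ast}\mathcal O_{\mathbf P(\mathcal E)}(1)$ outright --- which builds in your twist normalization $\mathcal E=\mathcal E_0\otimes\mathcal O_{\mathbf P^1}(b)$ automatically --- and leaves implicit the numerical conditions $e_i\geq 0$ and $\sum_i e_i\geq 2$ of Definition \ref{ratscroll}, which you verify explicitly via global generation and non-degeneracy.
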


\begin{proof}
If $X$ is a linearly normal scroll over $\mathbf P^1$, then
$X=\iota(\mathbf P(\mathcal E))$, where $\mathcal E=\pi_{\ast}\mathcal O_{\mathbf P(\mathcal E)}(1)$ is a vector bundle over $\mathbf P^1$ and $\iota:\mathbf P(\mathcal E)\hookrightarrow \mathbf P(H^0(\mathcal E))$.
By Grothendieck's splitting theorem (cf. \cite{Haze}) every vector bundle over $\mathbf P^1$ splits, i.e. $\mathcal E$ is of the form $\mathcal E=\oplus_{i}\mathcal O_{\mathbf P^1}(e_i)$.
\end{proof}

\noindent A more geometric description of a rational normal scroll is given by the following definition (cf. \cite{Stevens}):

\begin{defn}
Let $e_1,e_2,\ldots, e_k$ be integers with $e_1\geq e_2\geq \ldots \geq e_k\geq 0$ and $\sum_{i=1}^k{e_i}\geq 2$. Let for $i=1,\ldots, k$, $\phi_i: \mathbf P^1\to C_i\subseteq \mathbf P^{e_i}\subseteq \mathbf P^N$, where $N=\sum_{i=1}^k{e_i}-k-1$, parametrize a rational normal curve of degree $e_i$, such that $\mathbf P^{e_1},\ldots,\mathbf P^{e_k}$ span the whole $\mathbf P^N$.
Then $$X=\overline{\bigcup_{P\in \mathbf P^1}{\langle\phi_1(P),\ldots, \phi_k(P)\rangle}}$$
is a \textup{rational normal scroll} of dimension $k$, degree $e_1+\cdots +e_k$ and scroll type $(e_1,\ldots, e_k)$.
In other words, each fiber of $X$ is spanned by $k$ points where each of these lies on a different rational normal curve. We call these $k$ rational normal curves $C_i$ \textup{directrix curves} of the scroll.
\end{defn}

\subsection{The Picard group of rational normal scrolls}\label{Picardscroll}

\noindent Let $H=[\iota^{\ast}\mathcal O_{\mathbf P^N}(1)]$ denote the hyperplane class, and let $F=[\pi^{\ast}\mathcal O_{\mathbf P^1}(1)]$ be the class of a fiber of $\mathbf P(\mathcal E)$. In the following we will use $H$ and $F$ to denote both the classes and divisors in the respective classes.

\noindent The \textit{Picard group} of $\mathbf P(\mathcal E)$ is generated by $H$ and $F$:
$$\Pic(\mathbf P(\mathcal E))=\mathbf Z[H]\oplus \mathbf Z[F].$$ 

\noindent We have the following intersection products:

$$H^k=f=\sum_{i=1}^k{e_i}, \quad H^{k-1}.F=1, \quad F^2=0.$$

\noindent A minimal section of $\mathbf P(\mathcal E)$ is given by
$B_0=H-rF$, where $r\in \mathbf N$ is maximal such that $H-rF$ is effective, in other words, $B_0=H-e_1F$.

\subsection{The $g^1_2(C)$-scroll and $g^1_3(C)$-scrolls}

\noindent By Proposition \ref{linsystem} there exists exactly one $g^1_2$ on $C$. We set

$$
S=\overline{\bigcup_{E\in g^1_2(C)}{\Span(E)}}\subseteq \mathbf P^{d-2},
$$

\noindent where $\Span(E)$ denotes the line in $\mathbf P^{d-2}$ spanned by the two points in the divisor $E$.

\noindent For each $\vert D\vert$ which is a $g^1_3(C)$ we set

$$
V_{\vert D\vert}=\overline{\bigcup_{D^{\prime}\in \vert D\vert}{\Span(D^{\prime})}}\subseteq \mathbf P^{d-2},
$$

\noindent where $\Span(D^{\prime})$ denotes the plane in $\mathbf P^{d-2}$ spanned by the three points in the divisor $D^{\prime}$.

\begin{prop}\label{normal}
Let $C\subseteq \mathbf P^{d-2}$ be a smooth linearly normal curve of degree $d\geq 6$ and genus $2$, let $S$ be the $g^1_2(C)$-scroll, and for a linear system $\vert D\vert$ which is a $g^1_3(C)$ let $V_{\vert D\vert}$ be the $g^1_3(C)$-scroll associated to $\vert D\vert$.
The scrolls $S$ and $V_{\vert D\vert}$ are rational normal scrolls.
\end{prop}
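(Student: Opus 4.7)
The approach is to realize each of $S$ and $V_{\vert D\vert}$ as the image of a projectivized vector bundle on $\mathbf{P}^1$ of the form considered in Definition \ref{ratscroll}, and then reduce the statement to a direct check of the numerical conditions on the splitting type. For $S$, let $f\colon C \to \mathbf{P}^1$ be the degree-$2$ morphism associated with $\vert K_C\vert$, and set $\mathcal{E}_S := f_{\ast}\mathcal{O}_C(H_C)$; this is a rank-$2$ locally free sheaf on $\mathbf{P}^1$ which, by Grothendieck, splits as $\mathcal{O}_{\mathbf{P}^1}(e_1)\oplus\mathcal{O}_{\mathbf{P}^1}(e_2)$ with $e_1\geq e_2$. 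Analogously, for $V_{\vert D\vert}$ let $g\colon C \to \mathbf{P}^1$ be the degree-$3$ morphism associated with $\vert D\vert$ and set $\mathcal{E}_V := g_{\ast}\mathcal{O}_C(H_C)$, locally free of rank $3$. In both cases the finiteness of the map gives $H^0(\mathbf{P}^1,\mathcal{E}) = H^0(C, \mathcal{O}_C(H_C))$, a vector space of dimension $d-1$, so the natural map $\iota\colon \mathbf{P}(\mathcal{E}) \to \mathbf{P}H^0(\mathbf{P}^1,\mathcal{E})$ lands in $\mathbf{P}^{d-2}$, the ambient space of $C$.

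Next I would identify $\iota(\mathbf{P}(\mathcal{E}_S))$ with $S$ and $\iota(\mathbf{P}(\mathcal{E}_V))$ with $V_{\vert D\vert}$. For $p\in\mathbf{P}^1$, the fiber $\pi^{-1}(p)\cong\mathbf{P}(\mathcal{E}\otimes k(p))$ maps linearly into $\mathbf{P}^{d-2}$, and the subspace of $H^0(\mathbf{P}^1,\mathcal{E})$ annihilated by evaluation at $p$ is, under the above identification, precisely $H^0(C, \mathcal{O}_C(H_C - f^{-1}(p)))$, the hyperplanes in $\mathbf{P}^{d-2}$ vanishing on the divisor $f^{-1}(p)$. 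Hence the image of $\pi^{-1}(p)$ is the linear span of $f^{-1}(p)\in\vert K_C\vert$, and the union over $p$ is exactly $S$. The $V_{\vert D\vert}$ case is parallel, with $\pi^{-1}(p)$ mapped onto the plane spanned by $g^{-1}(p)\in\vert D\vert$.

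Finally one checks the numerical conditions in Definition \ref{ratscroll}. The Euler characteristic computation on $\mathbf{P}^1$ gives $\sum e_i = \chi(\mathcal{E}) - \rk\mathcal{E}$, which equals $d-3$ for $\mathcal{E}_S$ and $d-4$ for $\mathcal{E}_V$; both are $\geq 2$ because $d\geq 6$. For the non-negativity $e_i\geq 0$ it suffices that $\mathcal{E}$ be globally generated, since a globally generated split bundle on $\mathbf{P}^1$ has non-negative summands. Global generation reduces to the surjectivity of the restriction $H^0(C, \mathcal{O}_C(H_C)) \to H^0(Z, \mathcal{O}_C(H_C)\vert_Z)$ for $Z=f^{-1}(p)$ or $Z=g^{-1}(p)$, which follows from $h^1(C, \mathcal{O}_C(H_C - Z)) = 0$ by Riemann--Roch, since $\deg(H_C - Z)\geq d-3\geq 3 > 2g-2=2$.

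I expect the main step to be the fiber-by-fiber identification of $\iota(\mathbf{P}(\mathcal{E}))$ with the geometric scroll, where one has to handle the ramification points of $f$ and $g$ carefully; this is precisely what the closure in the definitions of $S$ and $V_{\vert D\vert}$ takes care of, and the same argument applied pointwise gives the match with the generic fibers. Once the identification and the positivity of the $e_i$ are in place, Definition \ref{ratscroll} (or equivalently Proposition \ref{linscroll}) yields the conclusion immediately.
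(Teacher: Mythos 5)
Your argument is correct, but it takes a genuinely different route from the paper. The paper's proof is a two-line descent argument: it observes that a scroll containing a linearly normal curve must itself be linearly normal (if the scroll were an isomorphic projection of a nondegenerate variety from a larger projective space, then $C$ would be too, contradicting linear normality of $C$), and then invokes Proposition \ref{linscroll}, i.e.\ Grothendieck splitting, to conclude. You instead construct the scrolls explicitly from the inside: you realize $\mathcal E_S=f_{\ast}\mathcal O_C(H_C)$ and $\mathcal E_V=g_{\ast}\mathcal O_C(H_C)$ as split bundles on $\mathbf P^1$, identify the images of the fibers of $\mathbf P(\mathcal E)$ with the spans of the divisors of the $g^1_2$ resp.\ $g^1_3$ via the projection-formula computation $H^0(\mathbf P^1,\mathcal E(-p))\cong H^0(C,\mathcal O_C(H_C-Z))$, and verify the numerical conditions of Definition \ref{ratscroll} directly: $\sum e_i=\chi(\mathcal E)-\rk\mathcal E$ gives $d-3$ and $d-4$, and $e_i\geq 0$ follows from global generation, which you correctly reduce to $h^1(\mathcal O_C(H_C-Z))=0$ via $\deg(H_C-Z)\geq d-3>2g-2$. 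Your checks are all sound (note that Definition \ref{ratscroll} allows $e_k=0$, so the possibly singular cones among the $V_{\vert D\vert}$ are covered, and your appeal to the closure handles non-reduced divisors at ramification points). The trade-off: the paper's proof is shorter and more conceptual, but yours buys more, since it recovers the degrees in Equation (\ref{degreescroll}) as a byproduct and sets up exactly the fiberwise evaluation machinery, comparing $h^0(\mathcal O_C(H_C-iD))$ across twists, that the paper only introduces later in the proof of Proposition \ref{di} to compute the scroll types. In effect you have proved a sharper statement (an explicit model of the scroll as $\mathbf P(f_{\ast}\mathcal O_C(H_C))$) at the cost of a longer verification.
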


\begin{proof}
The rationality of $S$ and each $V_{\vert D\vert}$ is obvious. 
For the rest notice that if a scroll $X$ contains a linearly normal curve $C$, then also $X$ has to be linearly normal: If $X$ was the image of a non-degenerate variety in higher-dimensional projective space under some projection, then $C$ had to be as well.
We conclude that since $C$ is linearly normal, $S$ and all $V_{\vert D\vert}$ are linearly normal.  
By Proposition \ref{linscroll} we can conclude that $S$ and all $V_{\vert D\vert}$ are rational normal scrolls.
\end{proof}

\noindent Note that the dimension of $S$ is equal to
$\dim(\vert K_C\vert)+\dim(\Span(E))=2$ and that the dimension of $V_{\vert D\vert}$ is equal to $\dim(\vert D\vert)+\dim(\Span(D^{\prime}))=3$.
By Proposition \ref{normal} the scrolls $S$ and $V_{\vert D\vert}$ are rational normal scrolls which implies by the observations in Remark \ref{degscroll} that we obtain the following degrees: 

\begin{equation}\label{degreescroll}
\deg (S)=d-3,\qquad \deg (V_{\vert D\vert})=d-4.
\end{equation}

\begin{prop}\label{classesonS}
Let $C\subseteq \mathbf P^{d-2}$ be a curve of genus $2$ and degree $d\geq 6$, and let $\mathcal E$ be a $\mathbf P^1$-bundle such that the image of the map $\iota:\mathbf P(\mathcal E)\to\mathbf P^{d-2}$ is the $g^1_2(C)$-scroll $S$.

\noindent The class of $C$ on $\mathbf P(\mathcal E)$ is equal to $[C]=2H-(d-6)F$.
\end{prop}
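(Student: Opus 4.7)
The plan is to write $[C]=aH+bF$ in $\Pic(\mathbf P(\mathcal E))=\mathbf Z[H]\oplus\mathbf Z[F]$ and to determine the two integer coefficients $a$ and $b$ by computing the two intersection numbers $C\cdot F$ and $C\cdot H$ and then inverting the resulting $2\times 2$ linear system. The intersection formulas recalled in subsection~\ref{Picardscroll}, combined with $\deg(S)=d-3$ from \eqref{degreescroll}, give
\[
H^2=d-3,\qquad H\cdot F=1,\qquad F^2=0,
\]
so that $(aH+bF)\cdot F=a$ and $(aH+bF)\cdot H=a(d-3)+b$.

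First, I would compute $C\cdot F$. A general fiber of $\pi:\mathbf P(\mathcal E)\to\mathbf P^1$ is mapped by $\iota$ onto the line in $\mathbf P^{d-2}$ spanned by a general effective divisor of the $g^1_2(C)$, which consists of two distinct points of $C$. Hence $C$ meets a general fiber transversally in exactly two points, and $C\cdot F=2$.

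Next, I would compute $C\cdot H$. Since $H=[\iota^{\ast}\mathcal O_{\mathbf P^{d-2}}(1)]$, the intersection $C\cdot H$ is simply the degree of the embedded curve in $\mathbf P^{d-2}$, which is $d$. Substituting into the two equations above yields $a=2$ and $b=d-2(d-3)=-(d-6)$, giving $[C]=2H-(d-6)F$ as claimed.

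The only point requiring a little care is the case where $S$ is singular (i.e.\ when the scroll type of $S$ has $e_2=0$): there $\iota:\mathbf P(\mathcal E)\to S$ is only a birational morphism, and $C$ on $\mathbf P(\mathcal E)$ must be interpreted as the proper transform of the embedded curve. Because the $g^1_2$ defines a well-defined morphism $C\to\mathbf P^1$, this lift exists canonically and the fiberwise count of two intersection points is preserved, so this is really a bookkeeping issue rather than an obstacle.
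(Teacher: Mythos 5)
Your proof is correct and follows essentially the same route as the paper: write $[C]=aH+bF$ in $\Pic(\mathbf P(\mathcal E))$, then determine $a=2$ from $[C].F=2$ and $b=6-d$ from $[C].H=d$ using $H^2=d-3$, $H.F=1$, $F^2=0$. Your closing remark on the singular case (scroll type $(3,0)$, possible only for $d=6$ here) is a legitimate extra precaution that the paper's proof leaves implicit, but it does not change the argument.
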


\begin{proof}
Write $[C]=aH+bF$ with $a,b\in \mathbf Z$.
Since $[C].F=2$, we obtain $a=2$, and $[C].H=d$ implies that $d=2(d-3)+b$, i.e.
$b=6-d$.
\end{proof}

\begin{prop}\label{classonV}
Let $C\subseteq \mathbf P^{d-2}$ be a smooth curve of genus $2$ and degree $d\geq 6$, and let $\mathcal E$ be a $\mathbf P^1$-bundle such that the image of $\iota:\mathbf P(\mathcal E)\to \mathbf P^{d-2}$ is a $g^1_3(C)$-scroll $V$ such that $C$ does not pass through the (possibly empty) singular locus of $V$.\\
The class of $C$ on $\mathbf P(\mathcal E)$ is equal to $[C]=3H^2-2(d-6)H.F$.
\end{prop}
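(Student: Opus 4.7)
The plan is to imitate the proof of Proposition \ref{classesonS}, working one dimension higher: here $\mathcal E$ has rank $3$, so $\mathbf P(\mathcal E)$ is a $\mathbf P^2$-bundle over $\mathbf P^1$, and $C$ is a (codimension $2$) curve class. Using the description of the Chow ring recalled in Subsection \ref{Picardscroll}, the group of codimension-$2$ cycles on $\mathbf P(\mathcal E)$ is freely generated by $H^2$ and $H\cdot F$ (since $F^2 = 0$). Hence we may write
$$[C] = a\, H^2 + b\, H\cdot F$$
for integers $a,b$, and the task reduces to determining these two coefficients by two intersection computations.

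To pin down $a$, I would intersect with $F$. A general fiber is a $\mathbf P^2$ which meets $C$ in the three points of a divisor in the linear system $|D|$ defining $V$, so $[C]\cdot F = 3$. Combined with $H^2 \cdot F = 1$ and $F^2 = 0$, this forces $a = 3$. To pin down $b$, I would intersect with $H$, using $[C]\cdot H = \deg C = d$ together with $H^3 = \deg V = d-4$ (see \eqref{degreescroll}) and $H^2\cdot F = 1$. This yields
$$d \;=\; 3(d-4) + b,$$
so $b = -2(d-6)$, which is exactly the asserted formula.

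The only point that needs care is why these intersection numbers may legitimately be computed on $\mathbf P(\mathcal E)$ rather than on $V\subseteq \mathbf P^{d-2}$, and this is precisely where the hypothesis that $C$ avoids the (possibly nonempty) singular locus of $V$ enters: under this assumption the birational map $\iota:\mathbf P(\mathcal E)\to V$ restricts to an isomorphism in a neighborhood of $C$, so there is a well-defined lift of $C$ to $\mathbf P(\mathcal E)$, and its degrees against $H = [\iota^\ast \mathcal O_{\mathbf P^{d-2}}(1)]$ and $F$ agree with the geometric counts above. Once this identification is in place, the rest is pure bookkeeping with the intersection products listed in Subsection \ref{Picardscroll}; I expect no further obstacle.
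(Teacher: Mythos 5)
Your proposal is correct and follows essentially the same route as the paper: write $[C]=aH^2+bH.F$ in codimension $2$, then use $[C].F=3$ to get $a=3$ and $[C].H=d$ together with $H^3=d-4$ to get $b=-2(d-6)$. Your extra remark justifying the lift of $C$ to $\mathbf P(\mathcal E)$ via the hypothesis on the singular locus is a sensible addition that the paper leaves implicit.
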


\begin{proof}
Since $C$ is of codimension $2$ on $\mathbf P(\mathcal E)$, we can write the class of $C$ on $\mathbf P(\mathcal E)$ as $[C]=aH^2+bH.F$ with $a,b\in \mathbf Z$.
Since $[C].F=3$, we obtain $a=3$, and $[C].H=d$ implies that $d=3(d-4)+b$, i.e.
$b=2(6-d)$.
\end{proof}

\noindent Theorem \ref{idealSV} states that $I_C$ is generated by the union of $I_S$ and the ideal of \textit{one} $g^1_3(C)$-scroll $V_{\vert D\vert}$ that obviously does not contain $S$. Since the ideal of each rational normal scroll is generated by quadrics, in order to give this statement a sense we have to prove that the ideal $I_C$ is generated by quadrics as well:

\begin{thm}\label{genbyquadrics}
Let $C$ be a smooth curve of genus $2$ embedded in $\mathbf P^{d-2}$ with a complete linear system of degree $d\geq 6$. 
The ideal $I_C$ is generated by quadrics.
\end{thm}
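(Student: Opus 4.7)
My plan is to use the flag $C\subset S\subset \mathbf P^{d-2}$, where $S$ is the $g^1_2(C)$-scroll, together with the standard short exact sequence of ideal sheaves
\begin{equation*}
0\to \mathcal I_S\to \mathcal I_C\to \mathcal I_{C/S}\to 0.
\end{equation*}
Since any rational normal scroll is arithmetically Cohen--Macaulay, $H^1(\mathbf P^{d-2},\mathcal I_S(k))=0$ for all $k$, so the sequence induces a short exact sequence on every graded piece. Hence it suffices to check that (a) $I_S$ is generated by quadrics, and (b) the ideal $I_{C/S}$ of $C$ in $S$ is generated by its degree-$2$ part as a graded module over the homogeneous coordinate ring of $S$. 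For (a) I will invoke the classical description of $I_S$ as the ideal of $2\times 2$ minors of a $2\times(d-3)$ matrix of linear forms.

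For (b), Proposition \ref{classesonS} gives $[C]=2H-(d-6)F$ on $\mathbf P(\mathcal E)$, and therefore
\begin{equation*}
H^0(S,\mathcal I_{C/S}(k)) \;=\; H^0(\mathbf P(\mathcal E),\mathcal O((k-2)H+(d-6)F)) \;=\; H^0(\mathbf P^1,\mathrm{Sym}^{k-2}\mathcal E\otimes \mathcal O_{\mathbf P^1}(d-6))
\end{equation*}
for $k\geq 2$, and vanishes for $k<2$. The question (b) reduces to showing, for each $k\geq 3$, surjectivity of the multiplication
\begin{equation*}
H^0(\mathcal O_{\mathbf P^1}(d-6))\otimes H^0(\mathrm{Sym}^{k-2}\mathcal E)\longrightarrow H^0(\mathrm{Sym}^{k-2}\mathcal E\otimes\mathcal O_{\mathbf P^1}(d-6)).
\end{equation*}
Writing $\mathcal E=\mathcal O(e_1)\oplus\mathcal O(e_2)$ with $e_1\geq e_2\geq 0$, I decompose $\mathrm{Sym}^{k-2}\mathcal E=\bigoplus_j\mathcal O(a_j)$ with all $a_j\geq 0$; the map then splits into summands $H^0(\mathcal O(d-6))\otimes H^0(\mathcal O(a_j))\to H^0(\mathcal O(a_j+d-6))$ on $\mathbf P^1$, each of which is surjective because $d-6\geq 0$ and $a_j\geq 0$.

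The step I expect to require the most care is the identification of $H^0(S,\mathcal I_{C/S}(k))$ with $H^0(\mathbf P(\mathcal E),\mathcal O((k-2)H+(d-6)F))$ when $S$ is singular (i.e.\ $e_2=0$), in which case $\iota$ is only a desingularization; this will follow from normality of $S$, the equality $\iota_*\mathcal O_{\mathbf P(\mathcal E)}=\mathcal O_S$, and the fact that the singular locus of $S$ has codimension $\geq 2$. The boundary case $d=6$ is particularly transparent, since then $(d-6)F=0$, $\mathcal I_{C/S}(2)\cong \mathcal O_S$, and $C$ is cut out in $S$ by a single quadric, so the whole argument collapses to ``$I_S$ generated by quadrics plus one extra quadric''.
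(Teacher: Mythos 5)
Your proposal is correct, but it takes a genuinely different route from the paper: the paper disposes of Theorem \ref{genbyquadrics} in one line, citing Theorem (4.a.1) of Green \cite{Green}, a general Koszul-cohomology result giving property $N_p$ for any line bundle of degree at least $2g+1+p$ on a curve of genus $g$ (here $g=2$, $p=1$, $d\geq 6$). You instead argue through the flag $C\subset S\subset \mathbf P^{d-2}$: ACM-ness of the scroll kills $H^1(\mathcal I_S(k))$, so $I_C/I_S\cong \bigoplus_k H^0(S,\mathcal I_{C/S}(k))$ as graded modules, and the class $[C]=2H-(d-6)F$ from Proposition \ref{classesonS} reduces generation of this module in degree $2$ to surjectivity of multiplication maps of nonnegative-degree line bundles on $\mathbf P^1$, which holds since $d\geq 6$. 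This is sound, close in spirit to the methods of \cite{Schreyer}. Two points you leave implicit are fine but worth stating: (i) your reduction to the displayed multiplication map also uses surjectivity of $\mathrm{Sym}^{k-2}H^0(\mathcal E)\to H^0(\mathrm{Sym}^{k-2}\mathcal E)$, i.e.\ projective normality of $S$, which is contained in the ACM property you already invoked; (ii) for the singular case, note that $e_2=0$ forces $d=6$ (since $e_1-e_2\leq 3$ by Proposition \ref{scrolltypeS} and $e_1+e_2=d-3$), and then $[C].B_0=2e_2-(d-6)=0$, so $C$ misses the vertex of the cone and your identification of $H^0(S,\mathcal I_{C/S}(k))$ with $H^0(\mathbf P(\mathcal E),\mathcal O((k-2)H+(d-6)F))$ goes through exactly as you sketch. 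As for what each approach buys: Green's theorem is shorter and far more general (arbitrary genus, higher syzygies), while your argument is elementary and self-contained, stays inside the scroll framework the paper is built on, and gives finer quantitative output for free, e.g.\ that $I_C$ needs exactly $h^0(\mathcal O_{\mathbf P^1}(d-6))=d-5$ quadric generators beyond those of $I_S$, together with the explicit module structure of $I_C/I_S$ --- information of the same flavor as what is exploited in the proof of Theorem \ref{idealSV}.
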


\begin{proof}
This is Theorem (4.a.1) in \cite{Green}.
\end{proof}

\begin{cor}\label{notrisecant}
Let $C\subseteq \mathbf P^{d-2}$ be a smooth curve of genus $2$ embedded with a complete linear system of degree $d\geq 6$.
Then $C$ has no trisecant lines.
\end{cor}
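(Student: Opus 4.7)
The plan is to derive the corollary directly from Theorem \ref{genbyquadrics} by a standard Bezout-style argument, exploiting the fact that a quadric cannot meet a line in more than two points (with multiplicity) unless it contains the line.

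First I would assume for contradiction that $L\subseteq \mathbf{P}^{d-2}$ is a line meeting $C$ in a subscheme of length at least $3$. Let $Q$ be any quadric hypersurface vanishing on $C$, i.e.\ any element of $(I_C)_2$. Restricting $Q$ to $L\cong \mathbf{P}^1$ gives a section of $\mathcal{O}_L(2)$ which vanishes at the three (possibly infinitely near) points where $L$ meets $C$. Since a nonzero element of $H^0(\mathbf{P}^1,\mathcal{O}(2))$ has at most two zeros counted with multiplicity, this section must be identically zero, so $L\subseteq Q$.

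Next I would invoke Theorem \ref{genbyquadrics}: the ideal $I_C$ is generated by its quadratic part $(I_C)_2$. Since every such quadric contains $L$, every element of $I_C$ vanishes on $L$, so $L\subseteq V(I_C)=C$ set-theoretically. But $C$ is irreducible of degree $d\geq 6$, and $L$ has degree $1$, so $L$ cannot be contained in $C$. This contradiction shows that no such trisecant line exists.

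The only subtlety is handling the case of a line tangent or having higher-order contact at fewer than three distinct points; this is already absorbed by working with the length of the scheme-theoretic intersection $L\cap C$ and with sections of $\mathcal{O}_L(2)$, so there is no real obstacle. The argument is essentially a one-line consequence of Theorem \ref{genbyquadrics} once phrased correctly.
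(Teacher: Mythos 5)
Your proposal is correct and follows essentially the same route as the paper: both use Theorem \ref{genbyquadrics} to cut out $C$ by quadrics and then observe that a line meeting $C$ in a scheme of length at least $3$ must lie in every such quadric, hence in $C$, which is absurd. Your version is slightly more careful than the paper's (you phrase the intersection scheme-theoretically via sections of $\mathcal O_L(2)$ and make the final contradiction $L\not\subseteq C$ explicit, where the paper leaves it implicit), but there is no substantive difference in method.
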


\begin{proof}
Since the ideal of $C$ is generated by quadrics, we can write $C=Q_1\cap \ldots \cap Q_r$ where the $Q_i$ are quadrics and $r=h^0(\mathcal I_C(2))$. Any line that intersects $C$ in three points, intersects each quadric $Q_i$ in at least three points, consequently it is contained in each $Q_i$ and hence in the intersection of all $Q_i$'s which is equal to $C$.
\end{proof}

\section{Scroll types of the $g^1_2(C)$-scroll $S$}\label{scrollS}

\noindent In this section we will list all possible types of the $g^1_2(C)$-scroll $S$ and give a connection between the complete linear system $\vert H_C\vert$ that embeds the curve $C$ into projective space and the scroll type of $S$. In this way we also give a proof of existence in all cases.

\noindent First we give a relation between the degrees of the directrix curves of the $g^1_2(C)$-scroll:

\begin{prop}\label{scrolltypeS}
Let $C\subseteq \mathbf P^{d-2}$ be a smooth curve of degree $d\geq 6$ and genus $2$.
For the scroll type $(e_1,e_2)$ of the $g^1_2(C)$-scroll $S$ we have
$e_1-e_2\leq 3$.
\end{prop}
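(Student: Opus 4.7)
The plan is to exploit the intersection number of $C$ with the minimal section $B_0$ on $\mathbf{P}(\mathcal{E})$. From equation~(\ref{degreescroll}) we know $\deg S = d-3$, so $e_1+e_2 = d-3$. By the discussion in Section~\ref{Picardscroll}, the minimal section is $B_0 = H - e_1 F$, which is an irreducible effective divisor on the smooth surface $\mathbf{P}(\mathcal{E})$.

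First I would compute $B_0\cdot C$ on $\mathbf{P}(\mathcal{E})$ using the class $[C] = 2H - (d-6)F$ from Proposition~\ref{classesonS} together with the intersection products $H^2 = d-3$, $H\cdot F = 1$, $F^2 = 0$. A direct calculation gives
$$B_0\cdot C = (H - e_1 F)\cdot(2H - (d-6)F) = 2(d-3) - (d-6) - 2e_1 = d - 2e_1,$$
and substituting $d = e_1+e_2+3$ yields $B_0\cdot C = 3 - (e_1 - e_2)$.

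Next I would argue that $B_0\cdot C \geq 0$. The key observation is that $B_0$ is a section of the ruling $\pi\colon \mathbf{P}(\mathcal{E})\to \mathbf{P}^1$, hence rational, whereas $C$ has genus $2$; so $B_0$ and $C$ are distinct irreducible curves on the smooth surface $\mathbf{P}(\mathcal{E})$, and their intersection number is therefore non-negative. Combining this with the formula above gives $e_1 - e_2 \leq 3$.

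I do not expect a serious obstacle here: the computation is routine and the non-negativity of $B_0\cdot C$ follows immediately from the genus discrepancy between $B_0$ and $C$. The only mild point to check is that the argument applies even when $S$ is singular, but this is fine because all intersection-theoretic statements are made on the smooth resolution $\mathbf{P}(\mathcal{E})$ and the class of $C$ on $\mathbf{P}(\mathcal{E})$ is the one furnished by Proposition~\ref{classesonS}.
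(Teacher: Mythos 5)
Your proposal is correct and follows essentially the same route as the paper: both compute $[C]\cdot B_0=(2H-(d-6)F)\cdot(H-e_1F)=3-(e_1-e_2)$ on $\mathbf P(\mathcal E)$ and conclude from $[C]\cdot B_0\geq 0$, the paper justifying non-negativity by the smoothness/irreducibility of $C$ forcing $B_0\not\subseteq C$, while you use the equivalent observation that $B_0$ is rational and $C$ has genus $2$. Your extra remark that the computation lives on the smooth model $\mathbf P(\mathcal E)$ even when $S$ is singular is a harmless and correct clarification.
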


\begin{proof}
\noindent Let $C_0=H-e_1F$ be a minimal section as described as $B_0$ in general in Section \ref{Picardscroll}. Since $C$ and $C_0$ are effective and $C$ is smooth, so $C_0\not\subseteq C$, we have $[C].C_0\geq 0$, which means by Proposition \ref{classesonS} that
$(2H-(d-6)F).(H-e_1F)\geq 0$, consequently $2e_1+2e_2-2e_1-(d-6)\geq 0$.
Since $d=e_1+e_2+3$ the result follows.
\end{proof}

\noindent Now we will describe the relation between $\vert H_C\vert$ with respect to $\vert K_C\vert$ and the scroll type of the $g^1_2(C)$-scroll $S$. By Equation (\ref{degreescroll}) the degree of $S$ is equal to $d-3$.

\subsection{The case when the degree $d$ of the curve $C$ is even}

\noindent If the degree $d$ of the embedded curve $C\subseteq \mathbf P^{d-2}$ is even, then by Proposition \ref{scrolltypeS} the $g^1_2(C)$-scroll $S$ has scroll type $\left(\frac{d-2}{2},\frac{d-4}{2}\right)$ or $\left(\frac{d}{2},\frac{d-6}{2}\right)$. The linear system $\vert H_C-\frac{d-2}{2}K_C\vert$ is of degree $2$ and thus non-empty by the Riemann-Roch theorem for curves and either equal to $\vert K_C\vert$ or equal to $\vert P+Q\vert$, where $P$ and $Q$ are points on $C$ such that $P+Q\notin \vert K_C\vert$. 

\noindent There is the following relation between $\vert H_C-\frac{d-2}{2}K_C\vert$ and the scroll type of $S$:

\begin{prop}\label{scrolltypeS1}
The scroll type of the $g^1_2(C)$-scroll $S$ is equal to $\left(\frac{d-2}{2},\frac{d-4}{2}\right)$
if and only if $\vert H_C-\frac{d-2}{2}K_C\vert=\vert P+Q\vert$, where $P$ and $Q$ are points on $C$ such that $P+Q\notin \vert K_C\vert$.
\end{prop}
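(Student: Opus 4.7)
The plan is to reformulate both sides of the equivalence as cohomological statements on $\mathbf{P}(\mathcal E)$, where $\iota \colon \mathbf{P}(\mathcal E) \to S$ is the map of Definition \ref{ratscroll}. Since any degree-$2$ divisor on a genus-$2$ curve has $h^0 \geq 1$ by Riemann-Roch, the linear system $\vert H_C - \frac{d-2}{2}K_C\vert$ is always effective of degree $2$, and it equals $\vert K_C\vert$ precisely when $h^0 = 2$, equivalently when $\vert H_C - \frac{d}{2}K_C\vert$ is non-empty, equivalently (the degree being $0$) when $H_C \sim \frac{d}{2} K_C$. On the scroll side, Proposition \ref{scrolltypeS} together with $d$ even forces $e_1 \in \{\frac{d-2}{2}, \frac{d}{2}\}$, and the type is $(\frac{d}{2}, \frac{d-6}{2})$ iff $e_1 = \frac{d}{2}$ iff the linear system $\vert H - \frac{d}{2} F\vert$ on $\mathbf{P}(\mathcal E)$ is non-empty, via the push-forward formula $\pi_*\mathcal O(H - aF) = \mathcal O(e_1 - a) \oplus \mathcal O(e_2 - a)$ on $\mathbf P^1$. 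It therefore suffices to show that $\vert H - \frac{d}{2}F\vert \neq \emptyset$ if and only if $H_C \sim \frac{d}{2} K_C$.

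I would establish this equivalence using the restriction sequence
\begin{equation*}
0 \to \mathcal O_{\mathbf{P}(\mathcal E)}(H - \tfrac{d}{2}F - C) \to \mathcal O_{\mathbf{P}(\mathcal E)}(H - \tfrac{d}{2}F) \to \mathcal O_C(H_C - \tfrac{d}{2}K_C) \to 0,
\end{equation*}
where Proposition \ref{classesonS} identifies the kernel as $\mathcal O_{\mathbf{P}(\mathcal E)}(-H + \frac{d-12}{2} F)$. Since $\mathcal O_{\mathbf{P}(\mathcal E)}(-H)$ restricts to $\mathcal O_{\mathbf P^1}(-1)$ on every fiber of $\pi$, both $\pi_*$ and $R^1\pi_*$ of $\mathcal O(-H + bF)$ vanish for every $b \in \mathbf Z$, so the Leray spectral sequence gives $H^i(\mathbf{P}(\mathcal E), \mathcal O(-H + bF)) = 0$ for $i = 0, 1$. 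The long exact sequence then collapses to an isomorphism $H^0(\mathbf{P}(\mathcal E), H - \frac{d}{2}F) \cong H^0(C, H_C - \frac{d}{2}K_C)$, from which the desired claim is immediate.

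The main subtlety is the small-degree case: when $d = 6$ the prospective type $(\frac{d}{2}, \frac{d-6}{2}) = (3, 0)$ makes the scroll $S$ singular (a cone over a twisted cubic), so $\iota$ is a resolution rather than an isomorphism. I would handle this by carrying out the whole computation on the smooth model $\mathbf{P}(\mathcal E)$, noting that $C$, being smooth and spanning $\mathbf P^{d-2}$, must avoid the vertex of the cone and so can be identified with its strict transform in $\mathbf{P}(\mathcal E)$; after this identification the restriction sequence above is valid verbatim. Apart from this bookkeeping, the only non-trivial ingredient is the vanishing $R^i \pi_* \mathcal O(-H) = 0$ for $i = 0, 1$, which is standard for a $\mathbf P^1$-bundle.
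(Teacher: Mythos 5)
Your argument is correct, but it takes a genuinely different route from the paper's. The paper proves the proposition geometrically: if the scroll type is $\left(\frac{d-2}{2},\frac{d-4}{2}\right)$, a general hyperplane section of $S$ containing a minimal section $C_0$ decomposes as $C_0$ plus $\frac{d-2}{2}$ fibers, whence $\vert H_C\vert=\vert\frac{d-2}{2}K_C+P+Q\vert$ with $P,Q\in C_0\cap C$ and $P+Q\notin\vert K_C\vert$ (the two points lie in distinct fibers); in the complementary case the same decomposition gives $\vert H_C\vert=\frac{d}{2}\vert K_C\vert$. You instead reduce the dichotomy on $e_1$ to the single cohomological statement $h^0(\mathbf P(\mathcal E),H-\frac{d}{2}F)=h^0(C,H_C-\frac{d}{2}K_C)$, obtained from the restriction sequence along $[C]=2H-(d-6)F$ together with the vanishing of $\pi_*$ and $R^1\pi_*$ of $\mathcal O(-H+bF)$. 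This is precisely the mechanism the paper itself deploys for the three-dimensional scrolls in Proposition \ref{di} (Schreyer's numbers $d_i$), so in effect you have transported that uniform method back to the rank-$2$ case. What each approach buys: yours is systematic, generalizes immediately to any twist $H-aF$ (and would reprove Proposition \ref{scrolltypeS2} verbatim), and localizes all the geometry into one intersection-theoretic input, namely the class of $C$; the paper's argument is shorter, needs no base-change vanishing, and produces the points $P,Q$ explicitly as $C_0\cap C$.

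Two small points to tighten. First, state explicitly that $F\vert_C\in\vert K_C\vert$, since the fibers of $S$ cut out the unique $g^1_2$, which equals $\vert K_C\vert$ by Proposition \ref{linsystem}; this is what licenses writing the cokernel of your restriction sequence as $\mathcal O_C(H_C-\frac{d}{2}K_C)$. Second, in the $d=6$ cone case your justification that $C$ avoids the vertex (``smooth and spans $\mathbf P^{d-2}$'') is not by itself a proof: smooth nondegenerate curves can pass through the vertex of a cone in general (e.g.\ a twisted cubic on a quadric cone). Here the claim does hold, either by Corollary \ref{notrisecant} --- if the vertex $v$ lay on $C$, then any ruling spanned by a $g^1_2$-divisor not containing $v$ would be a trisecant line of $C$ --- or by the intersection computation $[C].B_0=2H.(H-3F)=2\cdot 3-6=0$, valid for the strict transform since the proof of Proposition \ref{classesonS} only uses $[C].F=2$ and $[C].H=d$, which shows the strict transform is disjoint from the exceptional section. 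With these two lines added, your proof is complete.
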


\begin{proof}
If the scroll type of $S$ is equal to $\left(\frac{d-2}{2},\frac{d-4}{2}\right)$, then a minimal section $C_0$ is of degree $\frac{d-4}{2}$, and a general hyperplane section of $S$ containing $C_0$ consists of $C_0$ and $\frac{d-2}{2}$ fibers of $S$.
Consequently, $\vert H_C\vert=\vert \frac{d-2}{2}K_C+P+Q\vert$, where $P$ and $Q$ are points in $C_0\cap C$ and $P+Q\notin \vert K_C\vert$.

\noindent Conversely, if $S$ is of scroll type $\left(\frac{d}{2},\frac{d-6}{2}\right)$, a general hyperplane section of $S$ that contains a minimal section $C_0$, which is of degree $\frac{d-6}{2}$, decomposes into $C_0$ and $\frac{d}{2}$ fibers of $S$. Hence $\vert H_C\vert=\frac{d}{2}\vert K_C\vert$.
\end{proof}

\subsection{The case when the degree $d$ of the curve $C$ is odd}

If the degree $d$ of the embedded curve $C$ is odd, then by Proposition \ref{scrolltypeS} the $g^1_2(C)$-scroll $S$ is of scroll type $\left(\frac{d-3}{2},\frac{d-3}{2}\right)$ or $\left(\frac{d-1}{2},\frac{d-5}{2}\right)$.

\noindent The linear system $\vert H_C-\frac{d-3}{2}K_C\vert$ has degree $3$, and it is thus non-empty by the Riemann-Roch theorem for curves. It is either equal to $\vert P+Q+R\vert$, where $P$, $Q$ and $R$ are points on $C$ such that none of $P+Q$, $P+R$ and $Q+R$ is a divisor in $\vert K_C\vert$, or equal to $\vert K_C+P\vert$, where $P$ is a point on $C$.

\noindent The following proposition states the relation between $\vert H_C-\frac{d-3}{2}K_C\vert$ and the scroll type of $S$:

\begin{prop}\label{scrolltypeS2}
The $g^1_2(C)$-scroll $S$ has scroll type $\left(\frac{d-3}{2},\frac{d-3}{2}\right)$ if and only if  $\vert H_C-\frac{d-3}{2}K_C\vert=\vert P+Q+R\vert$, where $P$, $Q$ and $R$ are points on $C$ such that none of $P+Q$, $P+R$ and $Q+R$ is a divisor in $\vert K_C\vert$.
\end{prop}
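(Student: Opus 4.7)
My plan is to mirror the structure of the proof of Proposition \ref{scrolltypeS1}. By Proposition \ref{scrolltypeS} the scroll type of $S$ is forced to be either $\left(\frac{d-3}{2},\frac{d-3}{2}\right)$ or $\left(\frac{d-1}{2},\frac{d-5}{2}\right)$, and a degree $3$ linear system on $C$ of dimension at least $1$ is, by Riemann--Roch on a genus $2$ curve, either a base-point-free $g^1_3$ (which is the form $\vert P+Q+R\vert$ with no $K_C$-subpair) or a $g^1_3$ of the form $\vert K_C+P\vert$ (having $P$ as base point, so that every effective divisor has a $K_C$-subpair). It therefore suffices to prove one implication in each case, matching each scroll type with the corresponding form of $\vert H_C-\frac{d-3}{2}K_C\vert$.

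For the unbalanced case $\left(\frac{d-1}{2},\frac{d-5}{2}\right)$ the minimal section $C_0=H-\frac{d-1}{2}F$ is unique, and by Proposition \ref{classesonS} one computes $[C]\cdot C_0 = 1$, so $C_0\cap C$ consists of a single point $P$. A general hyperplane section of $S$ that contains $C_0$ has residual class $\frac{d-1}{2}F$, hence decomposes as $C_0$ together with $\frac{d-1}{2}$ fibers of $S$; restricting to $C$ yields $H_C\sim \frac{d-1}{2}K_C+P$, so that $\vert H_C-\frac{d-3}{2}K_C\vert=\vert K_C+P\vert$.

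For the balanced case $\left(\frac{d-3}{2},\frac{d-3}{2}\right)$ one has a pencil of minimal sections $C_0=H-\frac{d-3}{2}F$, and Proposition \ref{classesonS} now gives $[C]\cdot C_0 = 3$. Write $C_0\cap C=P+Q+R$. The same hyperplane section argument produces $\vert H_C\vert\ni \frac{d-3}{2}K_C+P+Q+R$, hence $P+Q+R\in \vert H_C-\frac{d-3}{2}K_C\vert$, and the linear system is $\vert P+Q+R\vert$.

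The main obstacle is verifying that none of $P+Q$, $P+R$, $Q+R$ lies in $\vert K_C\vert$. Suppose for contradiction that $P+Q\sim K_C$. Then $P$ and $Q$ both lie on the fiber $F_0$ of $S$ spanned by this $K_C$-divisor, and, by construction, on $C_0$; so $\{P,Q\}\subseteq F_0\cap C_0$. On $\mathbf P(\mathcal E)$ one has $F_0\cdot C_0 = 1$, so $F_0$ meets $C_0$ in a single reduced point. If $P\neq Q$ this is an immediate contradiction. If instead $P=Q$ (so that $2P\sim K_C$), then from $F_0\cdot C = 2$ one gets $F_0\cap C=2P$, i.e. $F_0$ is tangent to $C$ at $P$; but $F_0$ meets $C_0$ transversely at $P$, so the tangent directions to $C$ and to $C_0$ at $P$ differ, forcing $(C\cdot C_0)_P=1$ and contradicting the assumption that $P$ appears with multiplicity $\geq 2$ in $C_0\cap C$. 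The cases $P+R\sim K_C$ and $Q+R\sim K_C$ are completely symmetric, which finishes the argument.
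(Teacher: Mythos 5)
Your proposal is correct and follows essentially the same route as the paper: it uses the dichotomy of the two possible scroll types from Proposition \ref{scrolltypeS} together with the dichotomy of degree-$3$ systems on a genus-$2$ curve, and in each case decomposes a general hyperplane section of $S$ through a minimal section $C_0$ into $C_0$ plus fibers to read off $\vert H_C\vert$. The one difference is an improvement in rigor: the paper simply asserts that no pair among the points of $C_0\cap C$ lies in $\vert K_C\vert$, whereas you justify this via the intersection numbers $F\cdot C_0=1$ and $F\cdot C=2$ and the tangent-direction argument at a Weierstrass point, which is a correct and welcome filling-in of that gap.
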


\begin{proof}
If $S$ has scroll type $\left(\frac{d-3}{2},\frac{d-3}{2}\right)$, then a general hyperplane section of $S$ containing a minimal section $C_0$ is equal to the union of $C_0$ and $\frac{d-3}{2}$ fibers of $S$.
We obtain that $\vert H_C\vert=\vert \frac{d-3}{2}K_C+P+Q+R\vert$, where $P,Q,R$ are points lying on $C_0\cap C$ and none of $P+Q$, $P+R$ or $Q+R$ is a divisor in $\vert K_C\vert$.

\noindent Conversely, if the scroll type of $S$ is equal to $\left(\frac{d-1}{2},\frac{d-5}{2}\right)$, then a general hyperplane section of $S$ that contains $C_0$ decomposes into a minimal section $C_0$ and $\frac{d-1}{2}$ fibers of $S$. 
Consequently, $\vert H_C\vert=\vert\frac{d-1}{2}K_C+P\vert$ where $P$ is a point in $C_0\cap C$.
\end{proof}

\section{Scroll types of $g^1_3(C)$-scrolls $V_{\vert D\vert}$}\label{scrollV}

\noindent In this section we will first give a relation between the degrees of the three directrix curves of a scroll $V_{\vert D\vert}$, and then we will give a connection between $\vert H_C\vert$ and the scroll type of $V_{\vert D\vert}$ for a given $\vert D\vert \in G^1_3(C)$.

\begin{prop}\label{scrolltypeV}
If $V$ is a $g^1_3(C)$-scroll such that the curve $C$ does not intersect the (possibly empty) singular locus of $V$, then for its scroll type $(e_1,e_2,e_3)$ we have $2e_1-e_2-e_3\leq 4$.
\end{prop}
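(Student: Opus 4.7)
The strategy parallels that of Proposition \ref{scrolltypeS}: intersect $C$ with the minimal directrix divisor on $\mathbf P(\mathcal E)$. As in Section \ref{Picardscroll}, the class $H-e_1F$ on $\mathbf P(\mathcal E)$ is effective and is represented (uniquely when $e_1>e_2$, and by a positive-dimensional family otherwise) by the two-dimensional subscroll $\Sigma\subseteq\mathbf P(\mathcal E)$ corresponding to the sub-bundle $\mathcal O_{\mathbf P^1}(e_2)\oplus\mathcal O_{\mathbf P^1}(e_3)\hookrightarrow\mathcal E$. This is the direct analogue of the minimal section $C_0=H-e_1F$ used in Proposition \ref{scrolltypeS}, promoted from a curve on the $2$-dimensional $S$ to a surface on the $3$-dimensional $V$.

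Using the class $[C]=3H^2-2(d-6)H.F$ from Proposition \ref{classonV} together with the intersection table $H^3=e_1+e_2+e_3$, $H^2.F=1$, $H.F^2=0$ of Section \ref{Picardscroll}, I would compute
\begin{align*}
[C]\cdot[\Sigma]
&=\bigl(3H^2-2(d-6)H.F\bigr)\cdot(H-e_1F)\\
&=3(e_1+e_2+e_3)-3e_1-2(d-6)\\
&=3e_2+3e_3-2(d-6).
\end{align*}
Substituting the relation $d=e_1+e_2+e_3+4$ which comes from (\ref{degreescroll}) produces
$$[C]\cdot[\Sigma]=-2e_1+e_2+e_3+4,$$
so the desired inequality $2e_1-e_2-e_3\leq 4$ is equivalent to the non-negativity of this intersection number.

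The only step requiring genuine argument is verifying that $C\not\subseteq\Sigma$, so that $[C]\cdot[\Sigma]$ is a genuine count of intersection points and hence non-negative. For this I would use linear normality: the image $\iota(\Sigma)$ is a rational normal scroll of dimension $2$ and degree $e_2+e_3$, which therefore spans a $\mathbf P^{e_2+e_3+1}$ inside $\mathbf P^{d-2}$. A hypothetical inclusion $C\subseteq\iota(\Sigma)$ would force the span $\langle C\rangle=\mathbf P^{d-2}$ to lie inside $\mathbf P^{e_2+e_3+1}$, giving $d-2\leq e_2+e_3+1$; combined with $d-3=e_1+e_2+e_3+1$ this would yield $e_1\leq -1$, contradicting $e_1\geq 0$. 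Hence $C\not\subseteq\Sigma$, the intersection number $[C]\cdot[\Sigma]$ is non-negative, and the inequality follows. The hypothesis that $C$ avoids $\Sing(V)$ is used only implicitly, via Proposition \ref{classonV}, which is what legitimizes computing $[C]$ on $\mathbf P(\mathcal E)$ in the first place.
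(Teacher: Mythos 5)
Your proposal is correct and takes essentially the same route as the paper: the paper's proof likewise intersects the class $[C]=3H^2-2(d-6)H.F$ from Proposition \ref{classonV} with the effective divisor $B_0=H-e_1F$ (your $\Sigma$) and deduces the inequality from $[C].B_0\geq 0$ together with $d=e_1+e_2+e_3+4$. The only cosmetic difference is the justification that $C\not\subseteq B_0$: the paper observes $h^0(\mathcal O_V(H-B_0))=e_1+1\geq 1$, so $B_0$ lies in a hyperplane while $C$ spans all of $\mathbf P^{d-2}$ --- the same span-dimension argument you carry out explicitly via $e_2+e_3+1<d-2$.
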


\begin{proof}
Let $B_0=H-e_1F$ denote a minimal section of the bundle $\mathbf P(\mathcal E)$. Since we have $h^0(\mathcal O_V(H-B_0))=h^0(\mathcal O_V(e_1F))=e_1+1\geq 1$, $B_0$ is contained in at least one hyperplane, consequently $B_0$ does not span all of $\mathbf P^{d-2}$. Since $C$ spans all of $\mathbf P^{d-2}$, $B_0$ cannot contain $C$, thus we have that $[C].B_0\geq 0$, i.e. we know that $(3H^2-2(d-6)HF).(H-e_1F)\geq 0$ by Proposition \ref{classonV}, which means that $3e_1+3e_2+3e_3-3e_1-2(d-6)\geq 0$. 
The result follows from $d=e_1+e_2+e_3+4$. 
\end{proof}

\begin{prop}\label{scrolltypesingV}
If $V=V_{\vert D\vert}$ is a singular scroll of scroll type $(e_1,e_2,0)$ such that the curve $C$ intersects the singular locus of $V$, then $e_1$ and $e_2$ satisfy the following: $e_1-e_2\leq 3$.
\end{prop}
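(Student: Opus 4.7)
The plan is to adapt the intersection-theoretic argument of Proposition~\ref{scrolltypeV}, with the new observation that when $C$ meets the singular locus of $V$ the proper transform of $C$ must meet the minimal-section divisor, which improves the bound from $\ge 0$ to $\ge 1$ and delivers the sharper inequality.

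Let $\iota\colon\tilde V=\mathbf P(\mathcal E)\to V$ be the tautological resolution and let $\tilde C\subseteq\tilde V$ denote the proper transform of $C$. Since $C$ is smooth and $\iota$ is birational, $\iota|_{\tilde C}\colon\tilde C\to C$ is birational, so $\tilde C\cdot H=d$. Write $[\tilde C]=aH^2+bHF$, where $a=\tilde C\cdot F$ is the degree of the morphism $\tilde C\to\mathbf P^1$ induced by the bundle projection. This morphism is given by the moving part of $|D|$: if $|D|$ is base-point-free then $a=3$; if $|D|$ has a base point then by Proposition~\ref{linsystem} the moving part is the unique $g^1_2(C)=|K_C|$, so $a=2$. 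In either case $a\ge 2$, and $b$ is determined by $af+b=d$ with $f=e_1+e_2$.

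A direct computation using $H^3=f$, $H^2F=1$, $HF^2=0$ gives
\[B_0\cdot\tilde C=(H-e_1F)(aH^2+bHF)=af-ae_1+b=d-ae_1=(1-a)e_1+e_2+4.\]
To promote this to a lower bound of $1$, note that the singular locus of $V$ is contained in $\iota(B_0)$: the distinguished section $\sigma_0\subseteq\tilde V$ corresponding to the $\mathcal O$-summand of $\mathcal E$ is contained in $B_0$ (because $B_0\cdot\sigma_0=-e_1<0$ forces containment), and $\iota(\sigma_0)$ is the singular point when $e_2>0$, while for $e_2=0$ the singular locus is the vertex line $\iota(B_0)$ itself. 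Hence by hypothesis $\tilde C\cap B_0\ne\emptyset$, and $\tilde C\not\subseteq B_0$ because $\iota(B_0)$ spans at most $\mathbf P^{e_2+1}\subsetneq\mathbf P^{d-2}$ while $C$ is non-degenerate. So the intersection is proper and $B_0\cdot\tilde C\ge 1$.

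Combining yields $(a-1)e_1-e_2\le 3$, and since $a\ge 2$ and $e_1\ge 0$ this implies $e_1-e_2\le(a-1)e_1-e_2\le 3$. The main technical obstacle is pinning down $a$ rigorously in the base-point case—that is, identifying the bundle-projection morphism $\tilde C\to\mathbf P^1$ with the morphism defined by $|K_C|$ rather than by $|D|$ itself—together with verifying the containment $\sigma_0\subseteq B_0$ and the non-containment $\tilde C\not\subseteq B_0$; each of these follows from standard intersection-theoretic facts on rational normal scrolls, but they need to be spelled out uniformly in the cases $e_2>0$ and $e_2=0$.
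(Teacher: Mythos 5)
Your proof is correct, but it takes a genuinely different route from the paper's. The paper proves Proposition~\ref{scrolltypesingV} in three lines by projection: a point $P\in\Sing(V)\cap C$ is a basepoint of $\vert D\vert$, so $\vert D\vert=\vert K_C+P\vert$; projecting from $P$ sends $C$ to a curve $C^{\prime}$ of degree $d-1$ in $\mathbf P^{d-3}$ and sends $V$ to the $g^1_2(C^{\prime})$-scroll of type $(e_1,e_2)$, whereupon Proposition~\ref{scrolltypeS} gives $e_1-e_2\leq 3$. You instead run the intersection-theoretic argument of Proposition~\ref{scrolltypeV} directly on the resolution $\mathbf P(\mathcal E)$, replacing the class of $C$ (which Proposition~\ref{classonV} only computes when $C$ avoids $\Sing(V)$) by the class $aH^2+bHF$ of the proper transform, and upgrading the trivial bound $B_0\cdot\tilde C\geq 0$ to $B_0\cdot\tilde C\geq 1$ via the hypothesis that $C$ meets $\Sing(V)$ together with $\iota^{-1}(\Sing V)\subseteq B_0$; your arithmetic $(a-1)e_1-e_2\leq 3$ with $a\geq 2$ is correct, as are the auxiliary verifications ($\sigma_0\subseteq B_0$ by negativity, $\tilde C\not\subseteq B_0$ by non-degeneracy of $C$ versus the span $\mathbf P^{e_2+1}$ of $\iota(B_0)$, and $\tilde C\cdot H=d$ by birationality of $\iota\vert_{\tilde C}$). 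What each approach buys: the paper's projection argument is shorter and recycles Proposition~\ref{scrolltypeS}, but it silently uses that any point of $C\cap\Sing(V)$ is a basepoint of $\vert D\vert$ and that the projected scroll is exactly the $g^1_2(C^{\prime})$-scroll; your argument is more self-contained and more robust, since by treating $a=2$ and $a=3$ uniformly you never need the implication ``$C$ through the vertex $\Rightarrow$ $\vert D\vert$ has a basepoint,'' at the cost of the bookkeeping you flag yourself --- identifying the ruling map on $\tilde C$ with the moving part of $\vert D\vert$ (which follows since two ruling planes meet only along $\Sing(V)$, so a point of $C$ off $\Sing(V)$ lies on the plane of $D^{\prime}$ iff it lies in $D^{\prime}$, and then by Proposition~\ref{linsystem} the moving part in the basepoint case is $\vert K_C\vert$), and checking the case $e_1=e_2$, where $B_0$ moves in a pencil but every member still contains $\sigma_0$ by your negativity argument, so nothing breaks.
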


\begin{proof}
If $V=V_{\vert D\vert}$ is a singular $g^1_3(C)$-scroll of type $(e_1,e_2,0)$ such that $C$ intersects its singular locus, then a point $P\in \Sing(V)\cap C$ is a basepoint of $\vert D\vert$, i.e. $\vert D\vert=\vert K_C+P\vert$. 
The projection from $P$ maps $C$ to a curve $C^{\prime}$ of degree $d-1$ in $\mathbf P^{d-3}$, and it maps $V_{\vert D\vert}$ to the $g^1_2(C^{\prime})$-scroll of type $(e_1,e_2)$. By Proposition \ref{scrolltypeS} we obtain $e_1-e_2\leq 3$.
\end{proof}

\noindent We will now come to the converse of Proposition \ref{scrolltypesingV}, i.e. the existence part:

\begin{prop}\label{singVexistence}
If $e_1$ and $e_2$ are integers with $e_1\geq e_2\geq 0$, $e_1-e_2\leq 3$ and $e_1+e_2=d-4$ with $d\geq 6$, then there exists a curve $C$ of genus $2$ and a divisor class $\vert H_C\vert$ on $C$ of degree $d$ that embeds $C$ into $\mathbf P^{d-2}$ such that there exists a $g^1_3(C)$-scroll of type $(e_1,e_2,0)$ such that its singular locus intersects the curve $C$.
\end{prop}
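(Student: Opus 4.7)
The plan is to reverse the projection argument of Proposition \ref{scrolltypesingV}: start from an abstract smooth curve $C$ of genus $2$ and a point $P\in C$, build a degree-$(d-1)$ embedding of $C$ whose $g^1_2$-scroll already has type $(e_1,e_2)$, and then lift this embedding to $\mathbf P^{d-2}$ in such a way that projection from $\phi(P)$ recovers the lower-dimensional picture. The linear system $\vert K_C+P\vert$ on the embedded curve will then furnish a $g^1_3$ with $P$ as a basepoint, so the associated scroll will be a cone with vertex $\phi(P)$ over the chosen $g^1_2$-scroll, forcing its type to be $(e_1,e_2,0)$.

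Concretely, first I would choose a divisor class $H_{C'}$ on $C$ of degree $d-1$ such that the complete linear system $\vert H_{C'}\vert$ embeds $C$ as a smooth linearly normal curve $C'\subset \mathbf P^{d-3}$ (very ampleness is automatic, as $d-1\geq 5=2g+1$) and such that the $g^1_2(C')$-scroll has type $(e_1,e_2)$. Since $e_1+e_2=d-4=(d-1)-3$ and $e_1-e_2\leq 3$, the existence of such an $H_{C'}$ is guaranteed by Propositions \ref{scrolltypeS1} and \ref{scrolltypeS2} applied to the embedding in $\mathbf P^{d-3}$: depending on the parity of $d-1$ and on the value of $e_1-e_2\in\{0,1,2,3\}$, take $H_{C'}$ respectively of the form $\frac{d-4}{2}K_C+R+S+T$, $\frac{d-3}{2}K_C+R+S$, $\frac{d-2}{2}K_C+R$, or $\frac{d-1}{2}K_C$, with auxiliary points chosen generically so that none of the relevant partial sums lies in $\vert K_C\vert$. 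Now set $H_C:=H_{C'}+P$: this is a divisor class of degree $d$, and since $d\geq 2g+1$, the linear system $\vert H_C\vert$ is very ample and embeds $C$ linearly normally into $\mathbf P^{d-2}$. The projection $\pi_P$ from $\phi(P)\in \mathbf P^{d-2}$ is given by the residual linear system $\vert H_C-P\vert=\vert H_{C'}\vert$, so $\pi_P$ realizes $C'$ as the projection of $C$.

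It remains to identify the scroll $V:=V_{\vert K_C+P\vert}$. Since $h^0(K_C+P)=h^0(K_C)=2$, the point $P$ is a basepoint of $\vert K_C+P\vert$, so every divisor $D'\in \vert K_C+P\vert$ has the form $E+P$ with $E\in\vert K_C\vert$; consequently every ruling plane $\Span(\phi(P),\phi(E))$ of $V$ contains $\phi(P)$, giving $\phi(P)\in\Sing V\cap C$. Projecting $V$ from $\phi(P)$ sends the ruling plane above $E$ to the line $\pi_P(\Span(\phi(E)))$, which is precisely the $g^1_2$-chord of $C'$ corresponding to $E$, so the image of $V$ under $\pi_P$ is the $g^1_2(C')$-scroll of type $(e_1,e_2)$. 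Hence $V$ is a cone with vertex $\phi(P)$ over a scroll of type $(e_1,e_2)$, which forces its own scroll type to be $(e_1,e_2,0)$, as required. The main obstacle is essentially bookkeeping in the first step, matching the four possible types $(e_1,e_2)$ to the dichotomies of Propositions \ref{scrolltypeS1} and \ref{scrolltypeS2} according to the parity of $d$; once that is done, the projection argument forces the conclusion.
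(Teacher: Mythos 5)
Your proposal is correct and follows essentially the same route as the paper's proof: choose a degree-$(d-1)$ system $\vert H'\vert$ embedding $C$ in $\mathbf P^{d-3}$ so that the $g^1_2(C)$-scroll has type $(e_1,e_2)$ (possible by the results of Section \ref{scrollS}), reembed with $\vert H_C\vert=\vert H'+P\vert$, and identify the $g^1_3(C)$-scroll of $\vert K_C+P\vert$ as the cone with vertex $P$ over that scroll, hence of type $(e_1,e_2,0)$ with $P\in\Sing(V)\cap C$. Your write-up simply makes explicit the parity bookkeeping and the cone identification that the paper leaves implicit.
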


\begin{proof}
Let $e_1\geq e_2\geq 0$ be integers with $e_1-e_2\leq 3$ and $e_1+e_2=d-4$. By the results in Section \ref{scrollS} there exists a curve $C$ of genus $2$, embedded with a system $\vert H^{\prime}\vert$ of degree $d-1$ into $\mathbf P^{d-3}$ such that its $g^1_2(C)$-scroll is of type $(e_1,e_2)$. Take a point $P$ on $C$ and reembed the curve $C$ with the linear system $\vert H_C\vert:=\vert H^{\prime}+P\vert$ into $\mathbf P^{d-2}$. The cone over the $g^1_2(C)$-scroll in $\mathbf P^{d-3}$ with $P$ as vertex is a $g^1_3(C)$-scroll in $\mathbf P^{d-2}$ of type $(e_1,e_2,0)$, and the point $P$ lies in the intersection of its singular locus and the curve $C$.
\end{proof}

\noindent Now we will describe all scroll types a $g^1_3(C)$-scroll $V_{\vert D\vert}$ can have as $\vert D\vert$ varies.
We will distinguish between $\vert D\vert$ with one basepoint and $\vert D\vert$ basepoint-free.

\subsection{The case when  $V_{\vert D\vert}$ is smooth or $V_{\vert D\vert}$ is singular and $C$ does not pass through the singular locus of $V_{\vert D\vert}$}

\noindent If $V_{\vert D\vert}$ is smooth or $V_{\vert D\vert}$ is singular, but the curve $C$ does not pass through the singular locus of $V_{\vert D\vert}$, then $\vert D\vert$ necessarily has to be basepoint-free, since a basepoint of $\vert D\vert$ is a point in the intersection of $C$ with the singular locus of $V_{\vert D\vert}$.\\
In order to describe the possible scroll types with a relation to $\vert H_C\vert$ we use a formula given in \cite{Schreyer}. We give an alternative proof of the fact that the following numbers $d_i$ determine the scroll type of a $g^1_3(C)$-scroll $V_{\vert D\vert}$:

\begin{prop}(\cite{Schreyer}, p. 114)\label{di}
\noindent Given a basepoint-free $\vert D\vert\in G^1_3(C)$, set 

\begin{eqnarray*}
d_0&=&h^0(\mathcal O_C(H_C))-h^0(\mathcal O_C(H_C-D)),\\
d_1&=&h^0(\mathcal O_C(H_C-D))-h^0(\mathcal O_C(H_C-2D)),\\
d_2&=&h^0(\mathcal O_C(H_C-2D))-h^0(\mathcal O_C(H_C-3D)),\\
&\vdots&\\
d_{\lfloor \frac{d}{3}\rfloor}&=&h^0(\mathcal O_C(H_C-\lfloor \frac{d}{3}\rfloor D))-\underbrace{h^0(\mathcal O_C(H_C-(\lfloor \frac{d}{3}\rfloor+1)D))}_{=0}.
\end{eqnarray*}

\noindent The scroll type $(e_1,e_2,e_3)$ of $V_{\vert D\vert}$ is then given by

\begin{eqnarray*}
e_1&=&\#\{j\vert d_j\geq 1\}-1,\\
e_2&=&\#\{j\vert d_j\geq 2\}-1,\\
e_3&=&\#\{j\vert d_j\geq 3\}-1.
\end{eqnarray*}
\end{prop}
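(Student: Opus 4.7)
The plan is to compute $h^0\bigl(C, \mathcal{O}_C(H_C - jD)\bigr)$ in terms of the splitting type of the rank-$3$ bundle $\mathcal{E}$ for which $V_{|D|} = \iota(\mathbf{P}(\mathcal{E}))$, and then to invert a conjugate-partition identity. The key geometric input is that the degree-$3$ morphism $\phi : C \to \mathbf{P}^1$ associated to the basepoint-free $|D|$ is the restriction to $C$ of the scroll projection $\pi : \mathbf{P}(\mathcal{E}) \to \mathbf{P}^1$, so that each fiber $F_p \cong \mathbf{P}^2$ of $\pi$ meets $C$ in the divisor $D_p \in |D|$ indexed by $p$.

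First I would establish the sheaf identification $\mathcal{E} \cong \phi_* \mathcal{O}_C(H_C)$ on $\mathbf{P}^1$. The natural restriction map from $\mathcal{E} = \pi_* \mathcal{O}_{\mathbf{P}(\mathcal{E})}(H)$ to $\phi_*\mathcal{O}_C(H_C)$ is, over a point $p \in \mathbf{P}^1$, the evaluation $H^0(F_p, \mathcal{O}_{\mathbf{P}^2}(1)) \to H^0(D_p, \mathcal{O}_{D_p})$ from linear forms on the plane $F_p$ to functions on the length-$3$ subscheme $D_p$. This map is an isomorphism because $D_p$ is never contained in a line of $F_p$: by Corollary \ref{notrisecant} the curve $C$ has no trisecant lines, so the three points of $D_p$ always span the plane $F_p$. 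A degree count then confirms the identification, since both bundles have rank $3$ and degree $d - 4$ on $\mathbf{P}^1$ (the latter by $\chi(\phi_*\mathcal{O}_C(H_C)) = \chi(\mathcal{O}_C(H_C)) = d - 1$). Twisting by $\mathcal{O}_{\mathbf{P}^1}(-j)$ and applying the projection formula yields
\[
\phi_*\mathcal{O}_C(H_C - jD) \;\cong\; \mathcal{E}(-j) \;=\; \bigoplus_{i=1}^{3} \mathcal{O}_{\mathbf{P}^1}(e_i - j),
\]
so that
\[
h^0\bigl(C, \mathcal{O}_C(H_C - jD)\bigr) \;=\; \sum_{i=1}^{3} \max(e_i - j + 1, 0).
\]

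Taking successive differences shows $d_j = \#\{i : e_i \geq j\}$. Since $e_1 \geq e_2 \geq e_3 \geq 0$, the formula
\[
\#\{j \geq 0 : d_j \geq \ell\} \;=\; e_\ell + 1, \qquad \ell = 1, 2, 3,
\]
is a purely combinatorial conjugation of partitions and yields the claimed expression for the scroll type. The main obstacle is the sheaf identification $\mathcal{E} \cong \phi_*\mathcal{O}_C(H_C)$: one must verify that the restriction map is an isomorphism on every fiber of $\pi$, including fibers over ramification points of $\phi$ and, in the singular case, the fibers meeting $\Sing(V_{|D|})$. The first is handled by interpreting $D_p$ as a length-$3$ scheme and invoking the no-trisecant property, while the second is controlled by the standing hypothesis that $C$ avoids $\Sing(V_{|D|})$, which ensures $C$ lifts cleanly to the smooth projective bundle $\mathbf{P}(\mathcal{E})$.
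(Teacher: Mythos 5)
Your argument is correct and proves the proposition, but it reaches the key identification by a genuinely different route than the paper. The paper's proof is purely global: since $C$ and $V=V_{\vert D\vert}$ are both linearly normal and span $\mathbf P^{d-2}$ (Proposition \ref{normal}), it identifies $H^0(C,\mathcal O_C(H_C))\cong H^0(\mathbf P(\mathcal E),\mathcal O_{\mathbf P(\mathcal E)}(H))\cong H^0(\mathbf P^1,\mathcal O_{\mathbf P^1}(e_1)\oplus\mathcal O_{\mathbf P^1}(e_2)\oplus\mathcal O_{\mathbf P^1}(e_3))$, and then asserts the corresponding twisted isomorphisms $H^0(C,\mathcal O_C(H_C-iD))\cong H^0(\mathbf P^1,\mathcal O_{\mathbf P^1}(e_1-i)\oplus\mathcal O_{\mathbf P^1}(e_2-i)\oplus\mathcal O_{\mathbf P^1}(e_3-i))$ for all $i$, from which the values of the $d_i$ and the scroll type are read off exactly as in your last two steps. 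You instead prove the stronger sheaf-level statement $\mathcal E\cong\phi_{\ast}\mathcal O_C(H_C)$ by checking the evaluation map fiber by fiber, using Corollary \ref{notrisecant} (read scheme-theoretically, which its B\'ezout-style proof does support) to rule out degenerate fibers where the length-$3$ scheme $D_p$, possibly of the form $2P+Q$ or $3P$ at ramification points, would lie on a line; the projection formula then yields all the twisted identifications uniformly. What your version buys is precisely a justification of the step the paper compresses into \emph{thus we obtain}: to pass from $i=0$ to $i\geq 1$ one needs that a hyperplane through a divisor $D^{\prime}\in\vert D\vert$ contains the entire fiber plane, i.e. that $D^{\prime}$ spans it, which is exactly the no-trisecant point you isolate; what the paper's version buys is brevity, resting on the linear normality already established. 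Two minor observations: your degree count is redundant once the fiberwise statement is in hand, since a morphism of locally free sheaves of the same rank that is an isomorphism on every fiber is an isomorphism; and the index bookkeeping in the statement is automatically consistent, because $e_1>\lfloor d/3\rfloor$ would give $h^0(\mathcal O_C(H_C-(\lfloor d/3\rfloor+1)D))\geq h^0(\mathcal O_{\mathbf P^1}(e_1-\lfloor d/3\rfloor-1))>0$, contradicting the vanishing by degree recorded there.
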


\begin{proof}
Since $C$ and $V$ are both linearly normal and span all of $\mathbf P^{d-2}$, there is an isomorphism 
$H^0(C,\mathcal O_C(H_C))\cong H^0(\mathbf P(\mathcal E),\mathcal O_{\mathbf P(\mathcal E)}(H))\cong H^0(\mathbf P^1,\mathcal O_{\mathbf P^1}(e_1)\oplus \mathcal O_{\mathbf P^1}(e_2)\oplus \mathcal O_{\mathbf P^1}(e_3))$, and thus we obtain

\vspace{-0.5cm}

\begin{small}
$$H^0(C,\mathcal O_C(H-iD))\cong H^0(\mathbf P(\mathcal E),\mathcal O_{\mathbf P(\mathcal E)}(H-iF))\cong H^0(\mathbf P^1,\mathcal O_{\mathbf P^1}(e_1-i)\oplus \mathcal O_{\mathbf P^1}(e_2-i)\oplus \mathcal O_{\mathbf P^1}(e_3-i))$$
\end{small}

\vspace{-0.cm}

\noindent for all $i \in \mathbf N_0$.

\noindent Consequently we obtain

$$
d_i=\left\{\begin{array}{ccl}
3,&\textrm{if } &0\leq i\leq e_3,\\
2,&\textrm{if } &e_3+1\leq i\leq e_2,\\
1,&\textrm{if } &e_2+1\leq i \leq e_1.\\
\end{array}\right.
$$

\end{proof}

\noindent Using the formula in Proposition \ref{di} we will now give all possible scroll types of $V_{\vert D\vert}$, in relation to $\vert H_C\vert$:

\vspace{0.2cm}

\begin{center}
{\renewcommand{\arraystretch}{1.5}\renewcommand{\tabcolsep}{0.2cm}
\begin{tabular}{|c|c|c|}
\hline
$d$&Conditions on $\vert H_C\vert$ &Scroll type of $V_{\vert D\vert}$\\
\hline\hline
\multirow{2}{*}{$d\equiv_{(3)}0$}
&
$\vert H_C-\frac{d-3}{3} D\vert=\vert D\vert$
&
$(\frac{d}{3},\frac{d}{3}-2,\frac{d}{3}-2)$
\\
\cline{2-3}
&
$\vert H_C-\frac{d-3}{3} D\vert \neq \vert D\vert$
&
$(\frac{d}{3}-1,\frac{d}{3}-1,\frac{d}{3}-2)$\\
\hline\hline
\multirow{2}{*}{$d\equiv_{(3)}1$}
&
$\vert H_C-\frac{d-1}{3}D\vert \neq \emptyset$
&
$(\frac{d-1}{3},\frac{d-1}{3}-1,\frac{d-1}{3}-2)$\\
\cline{2-3}
&
$\vert H_C-\frac{d-1}{3}D\vert=\emptyset$
&
$(\frac{d-1}{3}-1,\frac{d-1}{3}-1,\frac{d-1}{3}-1)$\\
\hline\hline
\multirow{2}{*}{$d\equiv_{(3)}2$}
&
$\vert H_C-\frac{d-2}{3} D\vert=\vert K_C\vert$
&
$(\frac{d-2}{3},\frac{d-2}{3},\frac{d-2}{3}-2)$\\
\cline{2-3}
&
$\vert H_C-\frac{d-2}{3} D\vert=\vert P+Q\vert$,
&
$(\frac{d-2}{3},\frac{d-2}{3}-1,\frac{d-2}{3}-1)$
\\
&
$P+Q\notin \vert K_C\vert$&\\
\hline
\end{tabular}}
\end{center}

\vspace{0.15cm}

\subsection{The case when $V_{\vert D\vert}$ is singular and $C$ passes through the singular locus of $V_{\vert D\vert}$}

\noindent If $V_{\vert D\vert}$ is singular, then its scroll type is equal to $(e_1,e_2,0)$ where $e_1\geq e_2\geq 0$. 
By Proposition \ref{scrolltypesingV}, if $C$ passes through the singular locus of $V_{\vert D\vert}$, then $e_2=0$ is only possible for $d\leq 7$.
For $d\geq 8$ the scroll $V_{\vert D\vert}$ has exactly one singular point which we will denote by $P$. For $d=6$ and $d=7$ let $P$ denote one point in the singular locus of $V_{\vert D\vert}$.
If the system $\vert D\vert$ has a basepoint $P$, then the curve passes through the singular locus of the scroll $V_{\vert D\vert}$. As suggested in Proposition \ref{singVexistence}, given $\vert H_C\vert$ and $\vert D\vert=\vert K_C+P\vert$, we can find the scroll type of $V_{\vert D\vert}$ by projecting from the point $P$ and using Proposition \ref{scrolltypeS1} and Proposition \ref{scrolltypeS2} for the $g^1_2(C^{\prime})$-scroll, where $C^{\prime}$ is the image of $C$ under the projection. Let in this situation $P^{\prime}$ always denote the point in $\vert K_C-P\vert$.

\subsubsection{The case when the degree $d$ of the curve $C$ is even}
If $d$ is even, then, since each linear system of degree $2$ is non-empty by the Riemann-Roch theorem for curves, we can write $\vert H_C\vert=\vert \frac{d}{2}K_C\vert=\vert \frac{d-2}{2}K_C+P+P^{\prime}\vert$ or $\vert H_C\vert=\vert \frac{d-2}{2}K_C+Q_1+Q_2\vert=\vert \frac{d-4}{2}K_C+Q_1+Q_2+P+P^{\prime}\vert$, where $Q_1$ and $Q_2$ are points on $C$ such that $Q_1+Q_2\notin \vert K_C\vert$.

\begin{prop}\label{scrolltypeV1}
Given $\vert D\vert=\vert K_C+P\vert$, the scroll type of $V_{\vert D\vert}$ is equal to 
$\left(\frac{d-4}{2},\frac{d-4}{2},0\right)$ if and only if 
$\vert H_C-\frac{d-2}{2}K_C-P\vert=\emptyset$.
\end{prop}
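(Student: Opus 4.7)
The plan is to project $V_{\vert D\vert}$ from its singular point $P$, reducing to the surface scroll case treated in Proposition \ref{scrolltypeS2}. Since $P$ is the base point of $\vert D\vert = \vert K_C + P\vert$, every divisor in $\vert D\vert$ has the form $P + E$ with $E \in \vert K_C\vert$, so every spanning plane $\Span(D')$ passes through $P$. As $\deg(H_C - P) = d-1 \geq 5 = 2g+1$, projection from $P$ sends $C$ isomorphically to a smooth linearly normal curve $C' \subseteq \mathbf P^{d-3}$ of degree $d-1$, embedded by $\vert H_C - P\vert$, and exhibits $V_{\vert D\vert}$ as the cone with vertex $P$ over the $g^1_2(C')$-scroll $S'$. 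Consequently $V_{\vert D\vert}$ has scroll type $\bigl(\frac{d-4}{2},\frac{d-4}{2},0\bigr)$ if and only if $S'$ has scroll type $\bigl(\frac{d-4}{2},\frac{d-4}{2}\bigr)$.

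Since $d$ is even, the degree $d-1$ of $C'$ is odd, so Proposition \ref{scrolltypeS2} applies to $C'$ and identifies the balanced case of $S'$ with the condition that the degree-$3$ linear system $\vert H_C - P - \frac{d-4}{2}K_C\vert$ is of the form $\vert Q_1 + Q_2 + Q_3\vert$ with no $Q_i + Q_j \in \vert K_C\vert$ (using the identification $K_{C'} = K_C$ under $C \cong C'$).

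The remaining step is to translate this into the stated emptiness condition. By Proposition \ref{linsystem} every complete degree-$3$ system $\vert E\vert$ on $C$ is a $g^1_3$; it is base-point-free if and only if it is not of the form $\vert K_C + Q\vert$, because a base point $Q$ of $\vert E\vert$ would force $\vert E - Q\vert = \vert K_C\vert$. Equivalently, $\vert E\vert = \vert Q_1+Q_2+Q_3\vert$ with no partial sum in $\vert K_C\vert$ if and only if $\vert E - K_C\vert = \emptyset$. Applied to $E = H_C - P - \frac{d-4}{2}K_C$, this last condition reads $\vert H_C - P - \frac{d-2}{2}K_C\vert = \emptyset$, which is exactly the condition in the proposition.

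The main obstacle is the careful bookkeeping of the various linear systems under the projection from $P$ (in particular, that $\vert H_C\vert$ induces $\vert H_C - P\vert$ on $C'$ and that $K_{C'}$ corresponds to $K_C$), after which the argument reduces mechanically to Proposition \ref{scrolltypeS2} together with the standard dichotomy for $g^1_3$'s on a genus-$2$ curve.
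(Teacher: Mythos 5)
Your proposal is correct and follows essentially the same route as the paper's own proof: project from the base point $P$ of $\vert D\vert$ to exhibit $V_{\vert D\vert}$ as the cone over the $g^1_2(C')$-scroll of the reembedded curve $C'\subseteq \mathbf P^{d-3}$ of odd degree $d-1$, and then invoke Proposition \ref{scrolltypeS2}. You are in fact slightly more explicit than the paper in two useful places, namely in verifying that the projection is an isomorphism onto a linearly normal curve (via $\deg(H_C-P)=d-1\geq 2g+1$) and in spelling out the dichotomy that a complete degree-$3$ system $\vert E\vert$ on $C$ contains a member with a canonical pair exactly when $\vert E-K_C\vert\neq\emptyset$, which the paper leaves implicit when passing from the condition in Proposition \ref{scrolltypeS2} to the stated emptiness condition.
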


\begin{proof}
Let $\vert H_C-\frac{d-2}{2}K_C-P\vert=\emptyset$. Projecting $C$ from $P$ yields a curve $C^{\prime}$ of genus $2$ and degree $d-1$ which is embedded in $\mathbf P^{d-3}$ with the linear system $\vert H^{\prime}\vert:=\vert H_C-P\vert$. Under this projection the scroll $V_{\vert D\vert}$ maps to the $g^1_2(C^{\prime})$-scroll $S^{\prime}$. The scroll $V_{\vert D\vert}$ is thus the cone over $S^{\prime}$ with $P$ as vertex, so if $S^{\prime}$ is of scroll type $(e_1,e_2)$, then $V_{\vert D\vert}$ is of scroll type $(e_1,e_2,0)$.\\
Since $\vert H^{\prime}-\frac{d-2}{2}K_C\vert=\emptyset$, we obtain by Proposition \ref{scrolltypeS2} that the scroll type of $V_{\vert D\vert}$ is equal to $\left(\frac{d-4}{2},\frac{d-4}{2},0\right)$.

\noindent Conversely, if $\vert H_C-\frac{d-2}{2}K_C-P\vert\neq \emptyset$, then the same procedure as above, i.e. projecting from $P$, yields $\vert H^{\prime}-\frac{d-2}{2}K_C\vert\neq \emptyset$, i.e. by Proposition \ref{scrolltypeS2} the scroll type of $V_{\vert D\vert}$ is equal to 
 $\left(\frac{d-2}{2},\frac{d-6}{2},0\right)$.
\end{proof}

\subsubsection{The case when the degree $d$ of the curve $C$ is odd}
If $d$ is odd, then we can write either $\vert H_C\vert=\vert \frac{d-1}{2}K_C+Q\vert=\vert \frac{d-3}{2}K_C+Q+P+P^{\prime}\vert$ or $\vert H_C\vert=\vert \frac{d-3}{2}K_C+\sum_{i=1}^3Q_i\vert=\vert \frac{d-5}{2}K_C+\sum_{i=1}^3Q_i+P+P^{\prime}\vert$, where $Q_1$, $Q_2$ and $Q_3$ are points on $C$ such that $Q_i+Q_j\notin \vert K_C\vert$ for all $i,j\in \{1,2,3\}$, $i\neq j$.

\begin{prop}\label{scrolltypeV2}
The scroll type of the $g^1_3(C)$-scroll $V_{\vert D\vert}$ associated to $\vert D\vert=\vert K_C+P\vert$ is equal to $\left(\frac{d-3}{2},\frac{d-5}{2},0\right)$ if and only if $\vert H_C-\frac{d-1}{2}K_C-P\vert=\emptyset$.
\end{prop}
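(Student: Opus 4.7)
The plan is to mirror the proof of Proposition \ref{scrolltypeV1}, projecting from the basepoint $P$ of $\vert D\vert$, but appealing to Proposition \ref{scrolltypeS1} in place of Proposition \ref{scrolltypeS2}, since now $d$ is odd and so the projected curve has even degree $d-1$. Let $C^{\prime}\subseteq \mathbf P^{d-3}$ be the smooth genus-$2$ curve obtained by projecting $C$ from $P$, linearly normally embedded by $\vert H^{\prime}\vert:=\vert H_C-P\vert$. Because $P$ is a basepoint of $\vert D\vert$, every ruling of $V_{\vert D\vert}$ passes through $P$, so $V_{\vert D\vert}$ is the cone with vertex $P$ over the $g^1_2(C^{\prime})$-scroll $S^{\prime}$. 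Consequently $V_{\vert D\vert}$ has scroll type $(e_1,e_2,0)$ if and only if $S^{\prime}$ has scroll type $(e_1,e_2)$.

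Next I would apply Proposition \ref{scrolltypeS1} to $C^{\prime}$: with $d^{\prime}=d-1$ even, the scroll $S^{\prime}$ has type $\left(\frac{d^{\prime}-2}{2},\frac{d^{\prime}-4}{2}\right)=\left(\frac{d-3}{2},\frac{d-5}{2}\right)$ if and only if $\vert H^{\prime}-\frac{d-3}{2}K_C\vert=\vert Q_1+Q_2\vert$ with $Q_1+Q_2\notin \vert K_C\vert$. It then remains to translate this condition into the emptiness statement in the proposition.

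For the translation, observe that $\vert H_C-P-\frac{d-3}{2}K_C\vert$ has degree $2$, hence is nonempty by Riemann-Roch, and any effective representative can be written as $Q_1+Q_2$. Then $Q_1+Q_2\in \vert K_C\vert$ is equivalent to $H_C-P-\frac{d-3}{2}K_C\sim K_C$, i.e., to $H_C\sim \frac{d-1}{2}K_C+P$, which holds if and only if the degree-$0$ system $\vert H_C-\frac{d-1}{2}K_C-P\vert$ contains the zero divisor, that is, is nonempty. Negating yields $Q_1+Q_2\notin \vert K_C\vert$ if and only if $\vert H_C-\frac{d-1}{2}K_C-P\vert=\emptyset$, which combined with the previous paragraph completes the equivalence.

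I do not foresee a serious obstacle: the geometric content --- recognizing $V_{\vert D\vert}$ as a cone with vertex a basepoint of its defining linear system --- is standard, and the remaining work is routine linear-equivalence bookkeeping. The only mild point to check is the small-degree edge case $d=7$ (the only odd $d\leq 7$ permitted by Proposition \ref{scrolltypesingV}), where $\frac{d-7}{2}=0$ and the alternative scroll type $\left(\frac{d-1}{2},\frac{d-7}{2},0\right)$ degenerates; but this is consistent with the cone construction and requires no modification of the argument.
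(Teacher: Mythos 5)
Your proposal is correct and takes essentially the same route as the paper: project from the basepoint $P$, recognize $V_{\vert D\vert}$ as the cone with vertex $P$ over the $g^1_2(C^{\prime})$-scroll, and apply the even-degree dichotomy of Proposition \ref{scrolltypeS1} to $C^{\prime}$ of degree $d-1$. Your linear-equivalence translation of the condition is spelled out more explicitly than in the paper, and you correctly use Proposition \ref{scrolltypeS1} for both directions, where the paper's converse cites Proposition \ref{scrolltypeS2} (apparently a slip, since the projected degree $d-1$ is even); the substance is identical.
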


\begin{proof}
We use the same idea as in the proof of Proposition \ref{scrolltypeV1}.
Let $\vert H_C-\frac{d-1}{2}K_C-P\vert=\emptyset$. Projecting $C$ from $P$ yields a curve $C^{\prime}$ of genus $2$ in $\mathbf P^{d-3}$, embedded with the system $\vert H^{\prime}\vert:=\vert H_C-P\vert$.
Since $\vert H^{\prime}-\frac{d-1}{2}K_C\vert=\emptyset$, by Proposition \ref{scrolltypeS1} the scroll type of $V_{\vert D\vert}$ is equal to
$\left(\frac{d-3}{2},\frac{d-5}{2},0\right)$.

\noindent Conversely, if $\vert H_C-\frac{d-1}{2}K_C-P\vert\neq \emptyset$, then $\vert H^{\prime}-\frac{d-1}{2}K_C\vert\neq\emptyset$.
By Proposition \ref{scrolltypeS2} we obtain that the scroll type of $V_{\vert D\vert}$ is equal to $\left(\frac{d-1}{2},\frac{d-7}{2},0\right)$.
\end{proof}

\begin{exa}
\noindent In order to illustrate Propositions \ref{di} and \ref{scrolltypeV2} we study the case $d=7$, i.e. let now $C\subseteq \mathbf P^5$ be a smooth curve of genus $2$ and degree $7$.
Below we list all scroll types of a $g^1_3(C)$-scroll $V_{\vert D\vert}$ in relation to $\vert H_C\vert$. In our description we will consider the systems $\vert H_C-3K_C\vert$ and $\vert H_C-2D\vert$. Both are of degree $1$, and thus each of these can either be empty or consist of one point.

\vspace{0.2cm}

\begin{small}
\begin{center}
$$
\begin{array}{|l|l|c|c|c|}
\hline
\begin{array}{l}
\textrm{Conditions on}\\
\textrm{the basepoint}\\
\textrm{locus of }\vert D\vert\\
\end{array}
 &
\begin{array}{c}
\hspace{1.35cm} \vert H_C\vert \\
\end{array}
&
\begin{array}{c}
\vert H_C-2D\vert\\
\end{array}
&
\begin{array}{c}
\vert H_C-3K_C\vert \\
\end{array}
& 
\begin{array}{c}
\textrm{Scroll type} \\
\textrm{of }V_{\vert D\vert}\\
\end{array}
\\
\hline\hline
\begin{array}{l}
\textrm{No basepoints}\\
\end{array}
&
\begin{array}{c}
\vert D+2K_C\vert\\
\end{array}
&
\begin{array}{c}
\emptyset \\
\end{array}
&
\begin{array}{c}
\emptyset \\
\end{array}
&
\begin{array}{c}
(1,1,1)\\
\end{array}\\
\hline
\begin{array}{l}
\textrm{No basepoints}\\
\end{array}
&
\begin{array}{l}
\vert D+K_C+Q_1+Q_2\vert,\\
Q_1+Q_2\notin \vert K_C\vert,\\
\vert D-Q_1-Q_2\vert =\emptyset,\\
\vert D-Q_1^{\prime}-Q_2^{\prime}\vert=\emptyset,\\
\textrm{where}\\
\vert Q_1^{\prime}\vert=\vert K_C-Q_1\vert,\\
\vert Q_2^{\prime}\vert=\vert K_C-Q_2\vert\\
\end{array}
&
\begin{array}{c}
\emptyset \\
\end{array}
&
\begin{array}{c}
\emptyset \\
\end{array}
&
\begin{array}{c}
(1,1,1)\\
\end{array}\\
\hline
\begin{array}{l}
\textrm{No basepoints}\\
\end{array}
&
\begin{array}{l}
\vert D+K_C+Q_1+Q_2\vert,\\
Q_1+Q_2\notin \vert K_C\vert,\\
\vert D-Q_1-Q_2\vert =\emptyset,\\
\vert D-Q_1^{\prime}-Q_2^{\prime}\vert\neq\emptyset,\\
\textrm{where}\\
\vert Q_1^{\prime}\vert=\vert K_C-Q_1\vert,\\
\vert Q_2^{\prime}\vert=\vert K_C-Q_2\vert\\
\end{array}
&
\begin{array}{c}
\emptyset \\
\end{array}
&
\begin{array}{c}
\vert D-Q_1^{\prime}-Q_2^{\prime}\vert\\
(\textrm{non-empty}) \\
\end{array}
&
\begin{array}{c}
(1,1,1)\\
\end{array}\\
\hline
\end{array}
$$
\end{center}
\end{small}

\begin{small}
\begin{center}
$$
\begin{array}{|c|l|c|c|c|}
\hline
\begin{array}{l}
\textrm{Conditions on}\\
\textrm{the basepoint}\\
\textrm{locus of }\vert D\vert\\
\end{array}
 &
\begin{array}{c}
\hspace{1.625cm} \vert H_C\vert \\
\end{array}
&
\begin{array}{c}
\vert H_C-2D\vert\\
\end{array}
&
\begin{array}{c}
\vert H_C-3K_C\vert \\
\end{array}
& 
\begin{array}{c}
\textrm{Scroll type} \\
\textrm{of }V_{\vert D\vert}\\
\end{array}
\\
\hline\hline
\begin{array}{l}
\textrm{No basepoints}\\
\end{array}
&
\begin{array}{l}
\vert D+K_C+Q_1+Q_2\vert,\\
Q_1+Q_2\notin \vert K_C\vert,\\
\vert D-Q_1-Q_2\vert \neq\emptyset,\\
\vert D-Q_1^{\prime}-Q_2^{\prime}\vert=\emptyset,\\
\textrm{where}\\
\vert Q_1^{\prime}\vert=\vert K_C-Q_1\vert,\\
\vert Q_2^{\prime}\vert=\vert K_C-Q_2\vert\\
\end{array}
&
\begin{array}{c}
\textrm{non-empty}\\
\end{array}
& 
\begin{array}{c}
\emptyset\\
\end{array}
 &
\begin{array}{c}
 (2,1,0)\\
\end{array}\\
\hline
\begin{array}{l}
\textrm{No basepoints}\\
\end{array}
&
\begin{array}{l}
\vert D+K_C+Q_1+Q_2\vert,\\
Q_1+Q_2\notin \vert K_C\vert,\\
\vert D-Q_1-Q_2\vert \neq\emptyset,\\
\vert D-Q_1^{\prime}-Q_2^{\prime}\vert\neq\emptyset,\\
\textrm{where}\\
\vert Q_1^{\prime}\vert=\vert K_C-Q_1\vert,\\
\vert Q_2^{\prime}\vert=\vert K_C-Q_2\vert\\
\end{array}
&
\begin{array}{c}
\textrm{non-empty}\\
\end{array}
& 
\begin{array}{c}
\vert D-Q_1^{\prime}-Q_2^{\prime}\vert\\
(\textrm{non-empty})\\
\end{array}
 &
\begin{array}{c}
 (2,1,0)\\
\end{array}\\
\hline
\begin{array}{l}
\textrm{One basepoint }P\\
\end{array}
&
\begin{array}{l}
\vert D+K_C+Q_1+Q_2\vert\\
=\vert 2K_C+P+Q_1+Q_2\vert,\\
Q_1+Q_2\notin \vert K_C\vert,\\
P\neq Q_i\\
\textrm{for all }i\in\{1,2\},\\
P+Q_i \notin \vert K_C\vert\\
\textrm{for all } i\in\{1,2\}\\
\end{array}
&
\begin{array}{c}
\emptyset\\
\end{array}
&
\begin{array}{c}
\emptyset\\
\end{array}
&
\begin{array}{c}
(2,1,0)\\
\end{array}
\\
\hline
\begin{array}{l}
\textrm{One basepoint }P\\
\end{array}
&
\begin{array}{l}
\vert D+K_C+Q_1+Q_2\vert\\
=\vert 2K_C+P+Q_1+Q_2\vert,\\
Q_1+Q_2\notin \vert K_C\vert,\\
P\neq Q_i\textrm{ for all}\\
i\in\{1,2\},\\
P+Q_i \in \vert K_C\vert\\
\textrm{for some } i\in\{1,2\}\\
\end{array}
&
\begin{array}{c}
\emptyset\\
\end{array}
&
\begin{array}{l}
\textrm{non-empty},\\
\textrm{different}\\
\textrm{from }P\\
\end{array}
&
\begin{array}{c}
(2,1,0)\\
\end{array}\\
\hline
\begin{array}{l}
\textrm{One basepoint }P\\
\end{array}
&
\begin{array}{l}
\vert 2D+Q\vert\\
=\vert 2K_C+2P+Q\vert,\\
P+Q\notin \vert K_C\vert,\\
2P \notin \vert K_C\vert\\
\end{array}
&
\begin{array}{c}
Q\\
(\textrm{non-empty})\\
\end{array}
&
\begin{array}{c}
\emptyset\\
\end{array}
&
\begin{array}{c}
(2,1,0)\\
\end{array}\\
\hline
\begin{array}{l}
\textrm{One basepoint }P\\
\end{array}
&
\begin{array}{l}
\vert 2D+Q\vert\\
=\vert 2K_C+2P+Q\vert,\\
P+Q\notin \vert K_C\vert,\\
P\neq Q,\\
2P\in \vert K_C\vert\\
\end{array}
&
\begin{array}{c}
Q\\
(\textrm{non-empty})\\
\end{array}
&
\begin{array}{l}
\hspace{0.8cm} Q\\
(\textrm{non-empty},\\
\textrm{different}\\
\textrm{from } P)\\
\end{array}
&
\begin{array}{c}
(2,1,0)\\
\end{array}\\
\hline
\begin{array}{l}
\textrm{One basepoint }P\\
\end{array}
&
\begin{array}{l}
\vert 3K_C+P\vert\\
\end{array}
& 
\begin{array}{c}
\vert K_C-P\vert\\
(\textrm{non-empty})\\
\end{array}
&
\begin{array}{c}
 P\\
\end{array}
& 
\begin{array}{c}
(3,0,0)\\
\end{array}\\
\hline
\end{array}
$$
\end{center}
\end{small}
\end{exa}

\vspace{0.2cm}

\section{The ideal of $C$ as a sum of scrollar ideals}\label{idealC}

\noindent In this section we will show that the ideal $I_C$ of a linearly normal embedded curve $C\subseteq \mathbf P^{d-2}$ of genus $2$ and degree $d\geq 6$ is generated by the ideals $I_S$ and $I_V$, where $S$ is the $g^1_2(C)$-scroll and $V=V_{\vert D\vert}$ is a $g^1_3(C)$-scroll not containing $S$.
In other words, we will prove the following main theorem in this section:

\begin{thm}\label{idealSV}
Let $C$ be a smooth and irreducible curve of genus $2$, linearly normal embedded in $\mathbf P^{d-2}$ with a complete linear system $\vert H_C\vert$ of degree $d\geq 6$. Moreover, let $S$ be the $g^1_2(C)$-scroll, and let $V$ be a $g^1_3(C)$-scroll that does not contain $S$. Then we have
 
$$I_S+I_V=I_C.$$ 
\end{thm}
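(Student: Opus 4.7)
My plan is to reduce the theorem to a dimension count in degree two and then argue by induction on $d$. Since $I_C$ is generated by quadrics by Theorem~\ref{genbyquadrics}, and $I_S + I_V \subseteq I_C$ holds trivially, it suffices to prove $(I_S + I_V)_2 = (I_C)_2$. Using Riemann--Roch on $C$ together with the Hilbert function of a rational normal scroll, one computes
$$h^0(\mathcal I_S(2)) = \tbinom{d-3}{2}, \quad h^0(\mathcal I_V(2)) = \tbinom{d-4}{2}, \quad h^0(\mathcal I_C(2)) = \tfrac{d^2-5d+2}{2}.$$
By inclusion--exclusion applied to the degree-two piece, the equality $(I_S+I_V)_2 = (I_C)_2$ is equivalent to $\dim H^0(\mathcal I_{S\cup V}(2)) = \tbinom{d-5}{2}$, where $S\cup V$ denotes the scheme-theoretic union.

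The base case of the induction, $d = 6$, is a direct dimension count: $S \subseteq \mathbf P^4$ is a cubic surface scroll and $V$ is a quadric threefold, with $(I_S)_2, (I_V)_2, (I_C)_2$ of dimensions $3, 1, 4$ respectively. The hypothesis $V \not\supseteq S$ ensures the defining quadric of $V$ is not in $I_S$, so $(I_S)_2 \cap (I_V)_2 = 0 = \tbinom{1}{2}$, giving $(I_S + I_V)_2 = (I_C)_2$.

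For the inductive step ($d \geq 7$), pick a general point $P \in C$ and project: $\pi_P$ sends $C$ to a linearly normal curve $C' \subseteq \mathbf P^{d-3}$ of genus $2$ and degree $d-1$, with $S' = \pi_P(S)$ the unique $g^1_2(C')$-scroll and $V' = \pi_P(V)$ a $g^1_3(C')$-scroll satisfying $V' \not\supseteq S'$ for generic $P$. By the inductive hypothesis, $I_{S'} + I_{V'} = I_{C'}$ in $\mathbf P^{d-3}$. The hard part is lifting this identity back to $\mathbf P^{d-2}$: in coordinates with $P = [1:0:\cdots:0]$, any quadric $Q \in (I_C)_2$ has the form $Q = x_0 L + Q_0$ with $Q_0 \in k[x_1,\ldots,x_{d-2}]_2$; the inductive hypothesis decomposes $Q_0$ modulo the cone ideals $\pi_P^*(I_{S'}), \pi_P^*(I_{V'})$, while the residual $x_0 L$ term must be absorbed into $(I_S + I_V)_2$ using the tangent condition $L(T_P C) = 0$ (forced by $Q$ vanishing on $C$ through $P$) together with quadrics in $I_S, I_V$ constructed from fibers through $P$. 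Verifying the compatibility of these contributions is the technical heart of the argument.
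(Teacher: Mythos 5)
Your overall frame is sound and, in spirit, matches the paper's bookkeeping: reducing to degree two is legitimate because $I_C$ is generated by quadrics (Theorem \ref{genbyquadrics}), your dimension counts $h^0(\mathcal I_S(2))=\binom{d-3}{2}$, $h^0(\mathcal I_V(2))=\binom{d-4}{2}$, $h^0(\mathcal I_C(2))=\frac{d^2-5d+2}{2}$ are correct, the equivalence with $h^0(\mathcal I_{S\cup V}(2))=\binom{d-5}{2}$ checks out, and the base case $d=6$ is fine. The gap is in the inductive step, and it is twofold. First, a concrete error: writing $Q=x_0L+Q_0\in (I_C)_2$ with $P=[1:0:\cdots :0]\in C$, the form $Q_0$ does \emph{not} lie in $\pi_P^{\ast}(I_{C'})$, since for a point $c\in C$ one has $Q_0(\pi_P(c))=-c_0L(c)\neq 0$ in general; so the inductive hypothesis $I_{S'}+I_{V'}=I_{C'}$ says nothing about decomposing $Q_0$. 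What induction actually gives you is that cones with vertex $P$ over quadrics through $S'$ (resp.\ $V'$) contain $S$ (resp.\ $V$), whence $\pi_P^{\ast}(I_{C'})_2=\pi_P^{\ast}(I_{S'})_2+\pi_P^{\ast}(I_{V'})_2\subseteq (I_S)_2+(I_V)_2$; this is exactly the subspace of quadric cones with vertex $P$ inside $(I_C)_2$, and it has codimension $h^0(\mathcal I_C(2))-h^0(\mathcal I_{C'}(2))=d-3$.

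Those remaining $d-3$ dimensions are the entire content of the theorem, and your sketch leaves them unproven --- you say so yourself (``the technical heart''). Note that $I_S$ cannot supply them alone: quadric cones with vertex $P$ containing $S$ are precisely pullbacks of quadrics through $S'$, so $(I_S)_2$ contributes at most $\binom{d-3}{2}-\binom{d-4}{2}=d-4<d-3$ dimensions modulo cones, and a genuinely new quadric through $V$ must be constructed. This construction is exactly what the paper's proof of Claim \ref{psisurjective} provides, and it is not routine: the paper projects from a \emph{secant line} $\langle R_1,R_2\rangle$ rather than a point (so its induction drops the degree by two and needs both $d=6$ and $d=7$ as base cases, with a nontrivial complete-intersection argument at $d=7$), cones the quadric supplied by the inductive hypothesis, and uses the fact that the cone is singular along the secant line to force it to contain the two fibers $F_{R_1}$, $F_{R_2}$ of $S$; varying $R_1,R_2$ and the auxiliary fibers $F_1,\ldots,F_{d-8}$ makes the divisors $F_{R_1}+F_{R_2}+F_1+\cdots+F_{d-8}$ span $\vert (d-6)F\vert$, proving surjectivity of $\psi\colon H^0(\mathcal I_V(2))\to H^0(\mathcal O_S((d-6)F))$, which is equivalent (via $S\cap V=C$, Proposition \ref{interSV}, another ingredient absent from your outline) to your target $h^0(\mathcal I_{S\cup V}(2))=\binom{d-5}{2}$. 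Until an explicit construction of this kind is carried out, your proposal is a correct reduction plus an unproven claim, not a proof.
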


\noindent We see that in this section we are only interested in $g^1_3(C)$-scrolls $V_{\vert D\vert}$ that do not contain the $g^1_2(C)$-scroll $S$. For this purpose we will now give a criterion for when a given $g^1_3(C)$-scroll $V_{\vert D\vert}$ does not contain the $g^1_2(C)$-scroll $S$:
 
\begin{prop}\label{SinV}
Let $C\subseteq \mathbf P^{d-2}$ be a curve of genus $2$ and degree $d\geq 6$, embedded with the system $\vert H_C\vert$, and let $S$ be the $g^1_2(C)$-scroll.
A $g^1_3(C)$-scroll $V=V_{\vert D\vert}$ contains $S$ if and only if at least one of the following holds:

\begin{itemize}
\item $\vert D\vert$ has a basepoint,
\item $d=6$ and $\vert H_C-D\vert$ has a basepoint or 
\item $d=7$ and $\vert H_C\vert=\vert D +2K_C\vert$.
\end{itemize}
\end{prop}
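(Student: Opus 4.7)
The plan is to treat the two implications separately, with each sufficient condition of ($\Leftarrow$) by a direct geometric argument and ($\Rightarrow$) resolved by a Riemann--Roch dimension count after a case split on whether a generic $\ell_K$ lies in a plane-fiber of $V_{|D|}$.

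For ($\Leftarrow$): If $|D|$ has basepoint $P$, then $|D|=|K_C+P|$, every plane-fiber $\langle K+P\rangle$ of $V_{|D|}$ contains the ruling $\ell_K=\langle K\rangle$ of $S$, and $V_{|D|}$ is the cone over $S$ with vertex $P$. For $d=6$ with $|H_C-D|$ having a basepoint, I first show $V_{|D|}=V_{|H_C-D|}$: for each $(D',E')\in|D|\times|H_C-D|$, $D'+E'\in|H_C|$ spans a hyperplane $\mathbf{P}^3\subset\mathbf{P}^4$ in which $\langle D'\rangle$ and $\langle E'\rangle$ meet in a line $\ell(D',E')$ lying in both quadrics; fixing $D'$ and varying $E'$ sweeps out $\langle D'\rangle$, and then varying $D'$ recovers both quadrics. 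The basepoint case applied to $|H_C-D|$ gives $S\subseteq V_{|H_C-D|}=V_{|D|}$. For $d=7$ with $|H_C|=|D+2K_C|$, I may assume $|D|$ is basepoint-free; under this hypothesis $|H_C-2D|=\emptyset$ (else $|D|=|2K_C-Q|$ would have a basepoint), and by Proposition \ref{di} the scroll $V_{|D|}$ has smooth type $(1,1,1)$ with $\bigcap_{D'}\langle D'\rangle=\emptyset$. The equivalence $H_C-D'-K\sim K_C$ yields $h^0(H_C-D'-K)=2$, so $D'\cup K$ spans only a $\mathbf{P}^3\subset\mathbf{P}^5$ and $\langle D'\rangle$ meets $\ell_K$ in a single point $P(D',K)\in V_{|D|}$; the morphism $D'\mapsto P(D',K)$ from $\mathbf{P}^1$ to $\ell_K\cong\mathbf{P}^1$ cannot be constant (empty base locus), hence is surjective, so $\ell_K\cap V_{|D|}$ is infinite and $\ell_K\subseteq V_{|D|}$.

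For ($\Rightarrow$): Assume $S\subseteq V_{|D|}$ and $|D|$ is basepoint-free (else case (i) is already in effect). Each $\ell_K\subseteq V_{|D|}$ is either contained in a plane-fiber or is a line-section of $V_{|D|}\to\mathbf{P}^1$. If generic $\ell_K$ lay in a fiber $\langle D'\rangle$, the 2 points of $K$ would lie in $\langle D'\rangle\cap C=D'$, giving $|D-K|\neq\emptyset$ and a morphism $|K_C|\cong\mathbf{P}^1\to C$ that must be constant since $g(C)=2$, producing a basepoint of $|D|$---a contradiction. So generic $\ell_K$ is a line-section meeting each $\langle D'\rangle$ in a single point, which forces $\dim\langle D'+K\rangle=3$; via the formula $\dim\langle E\rangle=d-2-h^0(H_C-E)$ this translates to
\[
h^0(H_C-D'-K)=d-5.
\]
The Riemann--Roch computation with $\deg(H_C-D'-K)=d-5$ and $h^1=h^0(K_C-(H_C-D'-K))$ is now decisive: for $d\geq 8$, $\deg>2g-2$ forces $h^1=0$ and $h^0=d-6\neq d-5$, so this subcase is impossible and only case (i) survives; for $d=7$, $h^0=2$ on a degree-$2$ divisor combined with uniqueness of $|K_C|$ (Proposition \ref{linsystem}) forces $H_C-D'-K\sim K_C$, i.e.\ $|H_C|=|D+2K_C|$; for $d=6$, $h^0=1$ on a degree-$1$ divisor forces $H_C-D'-K\sim R$ for a unique $R\in C$, giving $|H_C-D|=|K_C+R|$ with basepoint $R$. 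The main obstacle is the matching of the intersection-theoretic data on $V_{|D|}$ with the linear-system data on $C$---specifically the equality $V_{|D|}=V_{|H_C-D|}$ in $d=6$ and the vanishing of $\bigcap_{D'}\langle D'\rangle$ in $d=7$ under $|D|$ basepoint-free---after which the Riemann--Roch step in the final case analysis is routine.
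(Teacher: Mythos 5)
Your proposal is correct in substance and in fact proves more than the paper does. Your necessity direction is essentially the paper's argument: both reduce, under the basepoint-free assumption on $\vert D\vert$, to the observation that a ruling line $\ell_K$ of $S$ lying in $V_{\vert D\vert}$ must meet every plane-fiber $\langle D^{\prime}\rangle$ in a single point (the alternative, $\ell_K$ inside a fiber, forcing a basepoint of $\vert D\vert$), whence $\dim\langle D^{\prime}+K\rangle=3$ and $h^0(\mathcal O_C(H_C-D-K_C))=d-5$. The paper then pins down $d\in\{6,7\}$ geometrically (the plane-fibers cut $S$ in directrix curves of degree at most $2$, forcing scroll type $(2,1)$ or $(2,2)$), whereas you obtain the same restriction more uniformly from Riemann--Roch: for $d\geq 8$ the degree $d-5$ exceeds $2g-2$, so $h^0=d-6$, a contradiction. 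Where you genuinely add content: the paper's written proof only shows that a basepoint implies $S\subseteq V$, and that containment plus basepoint-freeness implies (ii) or (iii); it never verifies that conditions (ii) and (iii) are themselves sufficient, which the ``if and only if'' requires. Your two sufficiency arguments --- the identification $V_{\vert D\vert}=V_{\vert H_C-D\vert}$ for $d=6$, and for $d=7$ the type-$(1,1,1)$ computation via Proposition \ref{di} (using that $\vert H_C-2D\vert\neq\emptyset$ would force a basepoint) followed by the non-constancy of $D^{\prime}\mapsto\langle D^{\prime}\rangle\cap\ell_K$ --- close this gap, and the $d=7$ argument is complete as written.

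One step needs repair: in the $d=6$ case, the claim that fixing $D^{\prime}$ and varying $E^{\prime}$ ``sweeps out'' $\langle D^{\prime}\rangle$ is asserted where it is the crux, and it fails in one degenerate configuration. What you need is that $E^{\prime}\mapsto\ell(D^{\prime},E^{\prime})$ is non-constant on the pencil $\vert H_C-D\vert$: then infinitely many distinct lines in the plane $\langle D^{\prime}\rangle$ lie on the quadric $V_{\vert H_C-D\vert}$, which forces $\langle D^{\prime}\rangle\subseteq V_{\vert H_C-D\vert}$ (otherwise the plane cuts the quadric in a conic, which contains at most two lines), and then equality of the two quadrics follows. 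Constancy of the line would mean all fibers $\langle E^{\prime}\rangle$ contain a common line, which happens exactly when $V_{\vert H_C-D\vert}$ has scroll type $(2,0,0)$, with vertex a line. By Proposition \ref{scrolltypeV1} applied with $d=6$ to $\vert H_C-D\vert=\vert K_C+P\vert$, this occurs if and only if $\vert H_C-2K_C-P\vert=\vert D-K_C\vert\neq\emptyset$, i.e., if and only if $\vert D\vert$ itself has a basepoint --- in which case containment already follows from your first bullet. In the remaining type $(1,1,0)$, distinct fibers meet only in the vertex point, so no common line exists and your sweep goes through. The gap is therefore fillable in a few lines, but as written the key non-constancy is unjustified.
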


\begin{proof}
If $\vert D\vert$ has a basepoint $P$, then $\vert D\vert=\vert K_C+P\vert$, hence each fiber of $S$ is contained in a fiber of $V_{\vert D\vert}$, and consequently $V_{\vert D\vert}$ contains $S$.

\noindent Conversely, if $S\subseteq V_{\vert D\vert}$ and $\vert D\vert$ is basepoint-free, then each fiber of $V_{\vert D\vert}$ intersects each fiber of $S$ in one point, since if it did not, then each fiber of $S$ had to be contained in a fiber of $V$ which meant that $\vert D\vert$ had a basepoint.
This implies that each fiber of $V_{\vert D\vert}$, which is a plane, intersects the scroll $S$ in a directrix curve of $S$. This curve is a smooth rational planar curve, consequently the degree of this curve is equal to $1$ or $2$.
This means that, since the degree of $C$ is greater or equal to $6$, the scroll type of $S$ is equal to $(2,1)$ or $(2,2)$, i.e. $d=6$ or $d=7$.
Since each fiber of $V_{\vert D\vert}$ intersects each fiber of $S$ in one point, every divisor in $\vert D+K_C\vert$ spans a $\mathbf P^3$ and thus $h^0(H-(D+K_C))=d-5$, which implies that in the case $d=6$ we obtain that $\vert H-D-K_C\vert$ contains one point, and that in the case $d=7$, $\vert H-D-K_C\vert=\vert K_C\vert$.
\end{proof}

\noindent Before we will prove Theorem \ref{idealSV} we show that $S\cap V=C$ for any $g^1_3(C)$-scroll $V$ that does not contain $S$:

\begin{prop}\label{interSV}
Let $C\subseteq \mathbf P^{d-2}$ be a smooth and irreducible linearly normal curve of genus $2$ and degree $d\geq 6$. For a $g^1_3(C)$-scroll $V=V_{\vert D\vert}$ that does not contain the $g^1_2(C)$-scroll $S$ the following holds:
$$S\cap V=C.$$
\end{prop}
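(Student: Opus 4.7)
The plan is to exploit the descriptions $S=\bigcup_{E\in |K_C|} L_E$ and $V=\bigcup_{D'\in |D|} P_{D'}$ (both closed, since $|K_C|$ and $|D|$ are proper) and to establish the pointwise statement $L_E\cap P_{D'}\subseteq C$ for every pair $(E,D')$. The reverse inclusion $C\subseteq S\cap V$ is immediate, so this will force
$$S\cap V \;=\; \bigcup_{E,D'} \bigl(L_E\cap P_{D'}\bigr) \;\subseteq\; C.$$

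Fix $E=A+B\in |K_C|$ and $D'=P_1+P_2+P_3\in |D|$. The controlling invariant is $\dim\langle L_E,P_{D'}\rangle = (d-2) - h^0(H_C-E-D')$: the larger this span, the smaller the intersection $L_E\cap P_{D'}$. Using $E\sim K_C$ and $D'\sim D$, Riemann--Roch yields, when the supports of $E$ and $D'$ are disjoint, the identity
$$h^0(H_C-E-D') \;=\; (d-6) + h^0(2K_C+D-H_C).$$
The hypothesis $V\not\supseteq S$, translated via Proposition \ref{SinV}, forces $h^0(2K_C+D-H_C)=0$: for $d\geq 8$ the degree is negative; for $d=7$ the excluded condition is precisely $H_C\sim D+2K_C$, the effectivity criterion; and for $d=6$ Serre duality converts the basepoint-freeness of $|H_C-D|$ (i.e. the vanishing $h^0(H_C-K_C-D)=0$, using that $|K_C-P|$ is always effective) into the desired vanishing. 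Consequently the five points of $E+D'$ span $\mathbf{P}^4$, giving $\dim(L_E\cap P_{D'})=1+2-4=-1$, so the intersection is empty.

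When $E$ and $D'$ share exactly one point, say $A=P_1$, an analogous Riemann--Roch computation (with the same vanishing input) shows the four distinct support points $\{A,B,P_2,P_3\}$ span $\mathbf{P}^3$, so $L_E$ and $P_{D'}$ meet in a single point, which is forced to be $A\in C$. The remaining possibility $E\subseteq D'$ would give $D'\sim K_C+R$, making $R$ a basepoint of $|D|$ and contradicting the hypothesis via Proposition \ref{SinV}. Thus $L_E\cap P_{D'}\subseteq C$ in every case. The main obstacle is the case-by-case Riemann--Roch bookkeeping to verify that the three-part disjunction in Proposition \ref{SinV} is exactly what forces the vanishing $h^0(2K_C+D-H_C)=0$; once that vanishing is in hand, the span-dimension count closes the argument, and non-reduced fibres ($E=2W$ at a Weierstrass point, or $D'$ with a double point) are handled by the same computation applied to the scheme-theoretic spans.
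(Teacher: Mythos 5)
Your proposal is correct, but it takes a genuinely different route from the paper's. The paper argues globally: for $d=6$ it invokes B\'ezout's theorem ($S\cap V$ has dimension $1$ and degree $6$ and contains the degree-$6$ curve $C$), and for $d\geq 7$ it shows that any point of $S\cap V$ off $C$ forces an entire fiber $F_0$ of $S$ into $V$ (using that $I_V$ is generated by quadrics and that $F_0$ already meets $C$ twice), then rules out such a fiber: if $F_0$ lies in a fiber of $V$, then $\vert D\vert$ has a basepoint and $S\subseteq V$; if $F_0$ meets every fiber of $V$ once, projecting from $F_0$ produces trisecant lines of the image curve, contradicting Corollary \ref{notrisecant} except for $d=7$ with $\vert H_C\vert=\vert D+2K_C\vert$, which Proposition \ref{SinV} again excludes. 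You instead work fiber by fiber: Grassmann's formula reduces everything to the span of the degree-$5$ (or, with overlap, degree-$4$) scheme $E\cup D'$, linear normality converts that span dimension into $h^0(H_C-E-D')$, and Riemann--Roch plus the single vanishing $h^0(2K_C+D-H_C)=0$ closes each case; your extraction of this vanishing from the three exclusions in Proposition \ref{SinV} is correct, including the $d=6$ step, where $h^0(H_C-K_C-D)=h^0(2K_C+D-H_C)$ because this is a degree-$1$ class with $\chi=0$, and the residual degree-$2$ overlap $E\leq D'$ is exactly the basepoint case since $h^0(K_C+R)=h^0(K_C)=2$ on a genus-$2$ curve. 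The scheme-theoretic treatment of non-reduced fibers is also legitimate: every $E\in\vert K_C\vert$ and every $D'\in\vert D\vert$ imposes independent conditions on $\vert H_C\vert$ (the relevant $h^1$'s vanish for $d\geq 6$), so the scroll fibers are exactly these spans and no closure is needed in the unions. As for what each approach buys: the paper's proof recycles its quadric machinery (Theorem \ref{genbyquadrics} via Green, and Corollary \ref{notrisecant}) and is geometric and short once that machinery is in place; yours is more elementary and self-contained---only Riemann--Roch and linear normality, with no B\'ezout, no generation by quadrics, and no projection argument---it treats all $d\geq 6$ uniformly, and it makes transparent why precisely the three exceptional conditions of Proposition \ref{SinV} occur: they amount to effectivity of $2K_C+D-H_C$ together with a basepoint of $\vert D\vert$.
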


\begin{proof}
Obviously, $C\subseteq S\cap V$. In the case $d=6$ the claim follows by Bézout's Theorem, since then $S\cap V$ is of degree $6$ and dimension $1$. Let now $d\geq 7$. If $S\cap V$ is more than $C$, then it must at least contain one line: If $S\cap V\supseteq C\cup P$ for a point $P$ that does not lie on $C$, then $P$ lies on one fiber $F_0$ of the scroll $S$. But since $P$ does not lie on the curve, each quadric that contains $V$ intersects $F_0$ in at least three points, consequently the whole fiber $F_0$ must be contained in each quadric that contains $V$, and since the ideal $I_V$ is generated by quadrics, $F_0$ is contained in $S\cap V$.
Now there are a priori two possibilities for a fiber $F$ of $S$ to be contained in $V$:
\begin{itemize}
\item[(1)] $F$ is contained in one of the fibers of $V=V_{\vert D\vert}$; this implies that the system $\vert D\vert$ has a basepoint, $\vert D\vert=\vert K_C+P\vert$, and consequently $S\subseteq V$.
\item[(2)] 
$F$ is intersecting each fiber of $V$ in one point. Since $F$ is a fiber of $S$, the point of intersection lies on $C$ for exactly two fibers of $V$. Projecting $C$ from $F$ yields a curve $C^{\prime}$ of genus $2$ and degree $d-2$, linearly normal embedded in $\mathbf P^{d-4}$ with the linear system $\vert H_C-K_C\vert$. The curve $C^{\prime}$ lies on the scrollar surface $S^{\prime}$ which is the image of $V$ under the projection from $F$. A general fiber of $V$ is projected to a fiber in $S^{\prime}$, and the three points in the intersection of $C$ with a general fiber in $V$ are projected to three points on a fiber in $S^{\prime}$, which is impossible by Corollary \ref{notrisecant}, unless $C$ was a curve of degree $7$ in $\mathbf P^5$. If $C$ is a curve of degree $7$ that projects to a curve $C^{\prime}$ of degree $5$ on $\mathbf P^1\times \mathbf P^1$, then $\vert H_C\vert=\vert D+2K_C\vert$, and the $g^1_3(C)$-scroll $V=V_{\vert D\vert}$ contains the $g^1_2(C)$-scroll $S$ by Proposition \ref{SinV}.
\end{itemize}
This proves that the intersection $S\cap V$ cannot contain any line, i.e. in total we obtain $S\cap V=C$.
\end{proof}

\noindent \textit{\textbf{Proof of Theorem \ref{idealSV}}}:\\
\noindent Let $S$ be the $g^1_2(C)$-scroll, and let $V=V_{\vert D\vert}$ be a $g^1_3(C)$-scroll that does not contain $S$.
There is the following short exact sequence of ideal sheaves:
$$0\to \mathcal I_{S\cup V}\to \mathcal I_V\to \mathcal I_{S\cap V}\vert_S\to 0.$$

\noindent By Propositon \ref{interSV} we have $S\cap V=C$, and moreover we know that $\mathcal I_C\vert_S=\mathcal O_S(-C)$. We thus obtain the following short exact sequence:
$$0\to \mathcal I_{S\cup V}\to \mathcal I_V\to \mathcal O_{S}(-C)\to 0.$$

\noindent Tensoring with $\mathcal O_{\mathbf P^{d-2}}(2H)$ and restricting yields the following exact sequence:

\begin{equation*}
0\to \mathcal I_{S\cup V}(2H)\to \mathcal I_{V}(2H)\to \mathcal O_{S}(2H-C)\to 0.
\end{equation*}

\noindent Taking the long exact sequence in cohomology yields

$$
0\to H^0(\mathcal I_{S\cup V}(2))\to H^0(\mathcal I_{V}(2))\to H^0(\mathcal O_{S}(2H-C))\to H^1(\mathcal I_{S\cup V}(2))\to 0.
$$

\noindent Note that $h^1(\mathcal I_V(2))=0$ since $V$ is projectively normal.

\noindent Since $[C]=2H-(d-6)F$ on $S$, we can write the above sequence
in the following form:

\begin{equation*}
0\to H^0(\mathcal I_{S\cup V}(2))\to H^0(\mathcal I_{V}(2))\overset{\psi}{\to} H^0(\mathcal O_{S}((d-6)F))\to H^1(\mathcal I_{S\cup V}(2))\to 0.
\end{equation*}
\\
\noindent Our aim is now to show the following claim:

\begin{claim}\label{psisurjective}
For each $\vert D\vert\in G^1_3(C)=\{g^1_3(C)'s\}$ such that $V_{\vert D\vert}$ does not contain $S$, the map $\psi:  H^0(\mathcal I_{V}(2))\to H^0(\mathcal O_{S}((d-6)F))$ defined via
$$
\psi(Q):=\left\{
\begin{array}{ccc}
0& \textrm{ if } & S\subseteq Q,\\
Q\cap S-C\in \vert (d-6)F\vert&\textrm{ if }& S\not\subseteq Q\\
\end{array}
\right.
$$

\noindent is surjective.
\end{claim}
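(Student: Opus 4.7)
My plan is to prove Claim \ref{psisurjective} by induction on $d \geq 6$. For the base case $d = 6$, the target $H^0(\mathcal O_S((d-6)F))$ reduces to $H^0(\mathcal O_S) \cong \mathbb C$, and surjectivity of $\psi$ amounts to exhibiting a single quadric $Q \in H^0(\mathcal I_V(2))$ with $S \not\subseteq Q$. Since $V$ is a rational normal scroll its ideal is generated by quadrics, so $V = \bigcap_{Q \in H^0(\mathcal I_V(2))} Q$; if every such $Q$ contained $S$, then $V \supseteq S$, contradicting the hypothesis.

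For the inductive step, assume the claim for degree $d-1$ and take a general point $p \in C$. By Corollary \ref{notrisecant} and genericity of $p$, projecting from $p$ embeds the image as a smooth linearly normal curve $C' \subseteq \mathbf P^{d-3}$ of genus $2$ and degree $d-1$, isomorphic to $C$. Via this isomorphism, the $g^1_2(C)$ and $\vert D\vert$ determine a $g^1_2(C')$ and a $g^1_3(C')$ yielding scrolls $S' \supseteq C'$ and $V' \supseteq C'$, and for generic $p$ one checks $V' \not\supseteq S'$. The target spaces then fit into a short exact sequence
\begin{equation*}
0 \to H^0(\mathcal O_{S'}((d-7)F')) \to H^0(\mathcal O_S((d-6)F)) \to \mathbb C_{[F_p]} \to 0,
\end{equation*}
where the quotient is evaluation at the fiber $F_p \subseteq S$ through $p$. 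By the inductive hypothesis $\psi'$ is surjective, so lifting its preimages back to $H^0(\mathcal I_V(2))$ (by pulling back and adding a correction term vanishing along the fiber of $V$ through $p$) covers the left-hand subspace. A single additional $Q_0 \in H^0(\mathcal I_V(2))$ with $F_p \not\subseteq Q_0$ --- whose existence follows again from the quadric-generation of $I_V$ combined with $V \not\supseteq S$ --- then covers the one-dimensional quotient, completing the surjection.

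The main obstacle is the lifting step. Given $Q' \in H^0(\mathcal I_{V'}(2))$, its pullback vanishes on the cone over $V'$ with vertex $p$, which typically differs from $V$ along the fiber of $V$ through $p$, so the pullback need not vanish on $V$ itself. Producing the correction term in $\mathcal I_V(2)$ and verifying that the resulting quadric has the intended $\psi$-image requires careful analysis of how $V$ behaves under projection from $p$, including tracking the scroll types of $S',V'$ relative to $S,V$ and ensuring that the correction does not alter the value of $\psi$ away from $F_p$.
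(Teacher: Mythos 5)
Your overall route---induction on $d$, projecting the curve and both scrolls down to lower degree, and lifting quadrics back by a cone construction---is exactly the paper's strategy; the only structural difference is that the paper projects from a secant line $\langle R_1,R_2\rangle$ spanned by two points of $C$ (so the degree drops by $2$ and both $d=6$ and $d=7$ serve as base cases), whereas you project from a single point (degree drop $1$, base case $d=6$ only). Your base case is correct. But the ``main obstacle'' you identify in the lifting step is spurious, and this misdiagnosis is the real gap in your proposal: every point $x\in V$ with $x\neq p$ lies on the line $\langle p,\pi_p(x)\rangle$, so $V$ is contained in the cone over $V'=\pi_p(V)$ with vertex $p$. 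Hence the cone $\widehat{Q'}$ over any quadric $Q'\supseteq V'$ \emph{automatically} contains $V$; no correction term exists or is needed---the cone itself is the lift. This is precisely how the paper lifts, taking the cone over $Q_{d-2}$ with vertex $L_R$.

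What your proposal actually leaves unproved is the computation of $\psi(\widehat{Q'})$, and this is where the paper's key arguments live. You must check three things: (i) $S\not\subseteq\widehat{Q'}$, which follows by projecting, since $S\subseteq\widehat{Q'}$ would force $S'=\pi_p(S)\subseteq Q'$; (ii) the fiber $F_p$ of $S$ through $p$ is contained in $\widehat{Q'}$---for $p$ not a Weierstrass point, $F_p$ is spanned by $p$ and the conjugate point $p'$ with $p+p'\in\vert K_C\vert$, and $\widehat{Q'}$ meets the line $F_p$ with multiplicity $\geq 2$ at its vertex $p$ while also passing through $p'\in C\subseteq V$, so $F_p\cdot\widehat{Q'}\geq 3>2$ forces $F_p\subseteq\widehat{Q'}$ (this is the paper's multiplicity argument for $F_{R_1}$ and $F_{R_2}$); and (iii) by the degree count $2\deg S=d+(d-6)$, the divisor $\psi(\widehat{Q'})$ is exactly $F_p$ plus the lift of $\psi'(Q')$, so the lifted quadrics cover precisely the codimension-one subspace of sections vanishing at $[F_p]$ in your exact sequence. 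Separately, your justification for $Q_0$ is incomplete: quadric generation of $I_V$ produces a quadric missing $F_p$ only if $F_p\not\subseteq V$, which does not follow from $S\not\subseteq V$ alone; you need Proposition \ref{interSV} ($S\cap V=C$, proved before the Claim and independently of it), or a genericity argument for $p$---or you can dispense with $Q_0$ altogether by varying $p$ and invoking linearity of $\psi$, which is what the paper does with $R_1,R_2$. Finally, your ``for generic $p$ one checks $V'\not\supseteq S'$'' does need the explicit avoidance conditions coming from Proposition \ref{SinV} (e.g.\ $p\notin\vert H_C-D-2K_C\vert$ when $d=8$), paralleling the paper's constraints on the $R_i$. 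With these repairs your single-point induction closes, and it is marginally tidier than the paper's in needing only the base case $d=6$ rather than the separate $d=7$ dimension count $h^0(\mathcal I_{S\cup V}(2))\leq 1$.
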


\noindent If the claim is true, then we have $h^1(\mathcal I_{S\cup V}(2))=0$, and thus the short exact sequence

$$0\to \mathcal I_{S\cup V}(2)\to \mathcal I_{S}(2)\oplus \mathcal I_{V}(2)\to \mathcal I_{\underbrace{S\cap V}_{=C}}(2)\to 0$$

\noindent gives the following long exact sequence in cohomology:

$$0\to H^0(\mathcal I_{S\cup V}(2))\to H^0(\mathcal I_{S}(2))\oplus H^0(\mathcal I_{V}(2))\to H^0(\mathcal I_{C}(2))\to 0.$$

\noindent This implies that

\begin{eqnarray*}
h^0(\mathcal I_C(2))&=&\dim(H^0(\mathcal I_{S}(2))\oplus H^0(\mathcal I_{V}(2)))-h^0(\mathcal I_{S\cup V}(2))\\
&=&\dim(H^0(\mathcal I_{S}(2))+H^0(\mathcal I_{V}(2))).
\end{eqnarray*}

\noindent This argument implies that, since $H^0(\mathcal I_{S}(2))+H^0(\mathcal I_{V}(2))\subseteq H^0(\mathcal I_{C}(2))$,
$$H^0(\mathcal I_{S}(2))+H^0(\mathcal I_{V}(2))= H^0(\mathcal I_{C}(2)),$$

\noindent but since all $I_S$, $I_V$ and $I_C$ are generated by
quadrics, we obtain $I_S+I_V=I_C$.\\

\noindent \textit{Proof of Claim \ref{psisurjective}}:

\noindent Now we will prove by induction that the map 
$$\psi:H^0(\mathcal I_V(2))\to H^0(\mathcal O_S((d-6)F))$$

\noindent as defined above is surjective: \\

\noindent \textit{The induction start: $d=6$ and $d=7$}:\\
For $d=6$ the surjectivity of $\psi$ is obvious. More precisely, if $C\subseteq \mathbf P^4$ is a curve of degree $6$, and if $\vert D\vert$ is a basepoint-free $g^1_3(C)$ such that $\vert H_C-D\vert$ is basepoint-free as well, then by Proposition \ref{SinV} the scroll $V_{\vert D\vert}=:Q_6$ is a quadric that does not contain the $g^1_2(C)$-scroll $S$.\\

\noindent For a curve $C\subseteq \mathbf P^5$ of degree $7$ let $V_{\vert D\vert}$ be a $g^1_3(C)$-scroll that does not contain the $g^1_2(C)$-scroll $S$. For any two quadrics $Q_1\neq Q_2$ in $\mathbf P^5$ their intersection $Q_1\cap Q_2$ is a complete intersection of dimension $3$ and degree $4$, hence if $Q_1$ and $Q_2$ both contained $S$ and $V$, then we have $Q_1\cap Q_2=V\cup \mathbf P^3$.
Since $S\subseteq Q_1\cap Q_2$ and $S$ is irreducible, we must have $S\subseteq V$ or $S\subseteq \mathbf P^3$, but since $S$ spans all of $\mathbf P^5$ and by hypothesis $S$ is not contained in $V$, both cases are impossible.\\
This shows that $h^0(\mathcal I_{S\cup V}(2))\leq 1$, and consequently we obtain
 
$$\dim(H^0(\mathcal I_S(2))+H^0(\mathcal I_V(2)))\geq h^0(\mathcal I_S(2))+h^0(\mathcal I_V(2))-1=8=h^0(\mathcal I_C(2)),$$
and thus $\psi$ is surjective.\\

\noindent \textit{The induction step: $d\geq 8$}:\\
Pick $d-8$ fibers $F_1,\ldots, F_{d-8}$ on $S$.
Let $R_1$ and $R_2$ be two points on $C$ such that $R_1+R_2$ is not a divisor in $\vert K_C\vert$ and such that $\vert R_i\vert\neq \vert H_C-D-2K_C\vert$, $i=1,2$, in the case if $d=8$ and the linear system $\vert H_C-D-2K_C\vert$ is non-empty.
Moreover, let $R_1^{\prime}$ and $R_2^{\prime}$ be two points on $C$ such that $R_1+R_1^{\prime}$ and $R_2+R_2^{\prime}$ are divisors in $\vert K_C\vert$.
Projecting $C$ from the line $L_R$ spanned by $R_1$ and $R_2$ yields a curve $C^{\prime}$ of degree $d-2$ in $\mathbf P^{d-4}$, embedded with the system $\vert H_C-R_1-R_2\vert$. Under this projection the $g^1_2(C)$-scroll $S$ maps to the $g^1_2(C^{\prime})$-scroll $S^{\prime}$, and the scroll $V_{\vert D\vert}$ maps to a $g^1_3(C^{\prime})$-scroll $V_{\vert D\vert}^{\prime}$ that does not contain $S^{\prime}$. 
Notice that the choice of $R_1$ and $R_2$ ensures that $V_{\vert D\vert}^{\prime}$ does not contain $S^{\prime}$ also in the cases $d=8$, $\vert H_C-D-2K\vert\neq \emptyset$ and $d=9$, $\vert H_C\vert=\vert D+3K_C\vert$. (In general it would not have been clear that the system $\vert H_C-R_1-R_2\vert$ does not belong to the cases given in Proposition \ref{SinV}.)\\ 
By the induction hypothesis we find a quadric $Q_{d-2}\subseteq \mathbf P^{d-4}$ which contains $V_{\vert D\vert}^{\prime}$ but not $S^{\prime}$, and which contains the fibers $F_1^{\prime},\ldots, F_{d-8}^{\prime}$, where $F_i^{\prime}$, $i=1,\ldots, d-8$, denotes the image of $F_i$ under the projection.
The cone over $Q_{d-2}$ with the line $L_R$ as vertex is then a quadric $Q_d$ in $\mathbf P^{d-2}$ which contains $V_{\vert D\vert}$ and not $S$. Moreover, $Q_d$ contains the fibers $F_1,\ldots, F_{d-8}$ and two more fibers of $S$:
The fiber $F_{R_1}$ spanned by $R_1$ and $R_1^{\prime}$ intersects the quadric $Q_d$ in three points, counted with multiplicity: The quadric $Q_d$ intersects this line in at least the two points $R_1$ and $R_1^{\prime}$, and since the quadric is singular along the line $L_R$, the quadric $Q_d$ intersects $F_{R_1}$ in the point $R_1$ with at least multiplicity $2$. Consequently $Q_d$ must contain the fiber $F_{R_1}$. The same argument applies to the fiber $F_{R_2}$ which is spanned by the points $R_2$ and $R_2^{\prime}$.\\ 
By degree reasons $Q_d\cap S$ cannot contain more than $C$, $F_{R_1}$, $F_{R_2}$ and $F_1,\ldots, F_{d-8}$. Consequently, $\psi(Q_d)=F_{R_1}\cup F_{R_2}\cup F_1 \cup \ldots \cup F_{d-8}$. Since the divisors $F_{R_1}+F_{R_2}+F_1+\cdots +F_{d-8}$, where $R_1$ and $R_2$ run through all points on $C$ and $F_1,\ldots, F_{d-8}$ run through all fibers of $S$, span the linear system $\vert (d-6)F\vert$, the linearity of $\psi$ together with varying the points $R_1$ and $R_2$ and the fibers $F_1,\ldots, F_{d-8}$ yields the surjectivity of $\psi$.
\qed

\subsection*{Acknowledgements}
\noindent This paper arose from parts of my Ph.D. thesis. I wish to thank my advisor Kristian Ranestad for many interesting discussions and very helpful feedback.

\end{document}